\newtheorem{theorem}{Theorem}
\newtheorem{proposition}{Proposition}
\newtheorem{lemma}{Lemma}
\newtheorem{corollary}{Corollary}
\theoremstyle{definition}
\newtheorem{definition}{Definition}
\theoremstyle{remark}
\newtheorem{remark}{Remark}
\numberwithin{equation}{section}
\newcommand{\field}[1]{\ensuremath{\mathbb{#1}}}
\newcommand{\CC}{\field{C}}
\newcommand{\DD}{\field{D}}
\newcommand{\HH}{\field{H}}
\newcommand{\PP}{\field{P}}
\newcommand{\ZZ}{\field{Z}}
\DeclareMathOperator{\im}{Im}
\DeclareMathOperator{\re}{Re}
\newcommand{\delb}{\bar\partial}
\newcommand{\fS}{\mathfrak{S}}
\newcommand{\curly}[1]{\mathscr{#1}}
\newcommand{\cC}{\curly{C}}
\newcommand{\cF}{\curly{F}}
\newcommand{\cH}{\curly{H}}
\newcommand{\cI}{\curly{I}}
\newcommand{\cK}{\curly{K}}
\newcommand{\cN}{\curly{N}}
\newcommand{\cP}{\curly{P}}
\newcommand{\cT}{\curly{T}}
\newcommand{\cU}{\curly{U}}
\newcommand{\cV}{\curly{V}}
\newcommand{\ga}{\gamma}
\newcommand{\s}{\gamma}
\newcommand{\Ga}{\Gamma}
\newcommand{\PSL}{\mathrm{PSL}(2,\mathbb{R})}
\newcommand{\Ur}{\mathrm{U}(r)}
\newcommand{\NN}{\field{N}}
\DeclareMathOperator{\tr}{tr} 
\DeclareMathOperator{\Aut}{Aut}
\DeclareMathOperator{\GL}{GL} \DeclareMathOperator{\End}{End}
 \DeclareMathOperator{\Ad}{Ad}
\begin{document}

\title[Poincar\' e series]{On vector-valued Poincar\' e series of weight 2}


\author[Meneses]{Claudio Meneses}
\address{Department of Mathematics\\
Stony Brook University\\ Stony Brook, NY 11794, USA\\
Current address:\\ Centro de Investigaci\'on en Matem\'aticas, A.C.\\
   Jalisco S/N, Col. Valenciana CP: 36023 Guanajuato, Gto.\\
   M\'exico \\}
\email{claudio.meneses@cimat.mx} 
\thanks{}

\thanks{}

\subjclass[2010]{Primary 30F35, 32G13; Secondary 11F30}

\date{}

\dedicatory{}

\begin{abstract}

Given a pair $(\Ga,\rho)$ of a Fuchsian group of the first kind, and a unitary representation $\rho$ of $\Ga$ of arbitrary rank, the problem of construction of vector-valued Poincar\'e series of weight 2 is considered. Implications in the theory of parabolic bundles are discussed. When the genus of the group is zero, it is shown how an explicit basis for the space of these functions can be constructed. 

\end{abstract}

\maketitle

\tableofcontents

\section{Introduction}

For a finite dimensional vector space $V$ over $\CC$ with a Hermitian inner product and $\rho:\Gamma\to\Aut V$ a unitary representation of a Fuchsian group $\Ga$, assumed irreducible without loss of generality,
\emph{an automorphic form of weight 2 with the representation} $\rho$ is a holomorphic function $f:\HH\to V$ satisfying 
\begin{equation}\label{automorphic}
f(\gamma\tau)\gamma'(\tau)=\rho(\gamma) f(\tau),\qquad \forall\gamma\in\Ga,\;\tau\in\HH.
\end{equation}
The case $V=\CC$ is due to Petersson \cite{Petersson40,Petersson48}. We will assume that either $V=\CC^{r}$ or $V=\End \CC^{r}$, with their standard inner products.

Our focus is to emphasize the geometric nature of automorphic forms. The significance of weight 2 comes from the fact that for representations of the form $\Ad\circ\rho$, matrix-valued cusp forms of weight 2 can be used to model infinitesimal deformations of geometric structures, corresponding 
to stable parabolic bundles on a compact Riemann surface \cite{N-S65,MS80, TZ07}.\footnote{In principle, the correspondence can be extended to an arbitrary compact Lie group $G$ with a given complex representation in $V$ (e.g. its complexified adjoint representation), for which a suitable version of Petersson inner product is also defined, provided that one considers the corresponding algebro-geometric notion of a \emph{parahoric torsor}, introduced by Balaji--Seshadri in \cite{BS15}, which generalizes the notion of parabolic bundles when $G = \mathrm{U}(r)$. For simplicity, we will only consider the unitary case.} It is thus desirable to establish a bridge between these two now classical subjects. For instance, the correspondence may be used to generate the spaces of classical cusp forms of arbitrary positive even weight  \cite{Men16}.  At a deeper level, the author's original motivation to develop the present work, is the geometric significance of the Petersson inner product on the spaces of matrix-valued cusp forms of weight 2 with a representation of the form $\Ad\circ\rho$, in the sense that it provides an analog of the Weil--Petersson K\"ahler metric on the corresponding moduli spaces of stable parabolic bundles, which was originally introduced and studied in a more differential--geometric setup in the works of Narasimhan and Atiyah--Bott \cite{Nar70,AB83}.

The subject of vector-valued automorphic forms has been studied already by several authors (e.g., the work of Knopp and Mason \cite{KM03,KM04} for the modular group and sufficiently large weight, and \cite{SS14}).\footnote{I would like to thank the referee for pointing out the recent preprint \cite{CF17}.} However, a general study of the weight 2 case for arbitrary Fuchsian groups seems to be lacking.
The present investigation undertakes Lehner's dictum \cite{Lehner64} as a guiding motif, which lists the three primary problems in the classical theory of automorphic forms as: (1) proving their existence, (2) constructing their Poincar\' e series, and (3) finding a minimal spanning family among these.  Our approach is twofold, and mixes aspects of the theory of holomorphic vector bundles over compact Riemann surfaces with the complex analysis on the upper half-plane.\footnote{Incidentally, the restriction to unitary representations could be lifted, provided that the conjugacy classes over elliptic and parabolic generators are kept fixed. This is the case since there is an open set $\cK^{0}_{\CC}$ in a corresponding complex $\mathrm{GL}(r,\CC)$-character variety $\cK_{\CC}$, consisting of equivalence classes of irreducible representations $\chi:\Gamma\to \mathrm{GL}(r,\CC)$, inducing stable parabolic structures (see remark \ref{even-split}), and which is isomorphic, under the Mehta--Seshadri correspondence, to the holomorphic cotangent bundle $T^{*}\cN$ to a moduli space of stable parabolic bundles $\cN$ (cf. \cite{Men15}). In this sense, for such generic $[\chi]\in\cK^{0}_{\CC}$, to every space of $\chi$-automorphic forms of weight 2, there corresponds a unique space of $\rho$-automorphic forms of weight 2, with $\rho$ unitary irreducible, up to equivalence in $\cK^{0}_{\CC}$. The correspondence is, however, highly nontrivial, and will not be considered here.}

The geometric interpretation of cusp forms as global holomorphic sections of a vector bundle enables the computation of the dimension of the space they span by means of the Riemann-Roch theorem (a foundational result implicit in the work of Andr\'e Weil \cite{Weil38}), thus solving (1) (cf. \cite{Hejhal83}). The main difficulty with (2) is that the standard Poincar\'e series fail to be absolutely convergent. This can be remedied  following Hecke's idea  \cite{Hecke27}, later extended by Petersson in the rank 1 case \cite{Petersson48}, on the introduction of a convergence factor. The present approach is a simplification and extension to arbitrary rank.  The basic principle that we implement is that vector-valued Poincar\'e series of weight 2 can be thought of as limits of certain absolutely convergent series \emph{within} the Hilbert space of measurable automorphic forms of weight 2. Thus, their construction follows from the properties of the Cauchy--Riemann operators on Hilbert spaces. We take the opportunity to remark that all the techniques developed in this work (the construction of Poincar\'e series, Petersson inner product properties, etc.) extend, and in fact greatly simplify, when one considers automorphic forms of arbitrary positive even weight. Keeping that in mind, we will only work with the weight 2 case. 

As our main result, we solve (3) in section \ref{genus 0} for the genus 0 case. An approach based on a  vector bundle analog of the uniformization theorem is possible, and is presented. Concretely, theorems \ref{theo-1} and \ref{theo-2} indicate how to choose a collection of independent Poincar\'e series spanning the space of cusp forms $\mathfrak{S}_{2}(\Ga,\rho)$ and $\mathfrak{S}_{2}(\Ga,\Ad\rho)$. We expect that the same geometric techniques may shed some light on the general case in arbitrary genus (which, to the best of our knowledge, is unsettled even in the classical scalar case), a problem we plan to return to, as well as to study further relations with the theory of moduli of parabolic bundles, in a separate publication.


The contents of this work are organized as follows: Section \ref{pre} is devoted to introducing conventions; in section \ref{geo} we recall how the dimension of the space of cusp forms of weight 2 can be determined by means  of the Riemann-Roch theorem on a suitable vector bundle; in section \ref{theo}, Poincar\' e series are constructed by means of a limiting procedure in the Hilbert space of measurable automorphic forms, and moreover, it is shown how, with the aid of the Petersson inner product, a complete family can be determined; in section \ref{genus 0} it is shown how a basis for the space of cusp forms can be prescribed in the genus zero case. 

\section{Preliminary remarks}\label{pre}

Let $\Ga$ be a Fuchsian group (i.e. a discrete subgroup of $\PSL$)  of the first kind.\footnote{In the classical 
literature, Fuchsian groups are also called \emph{principal-circle groups}, or simply \emph{circle groups}, 
and a Fuchsian group of the first kind is often called \emph{horocyclic}. $H$-groups are also called 
\emph{zonal}.} Among these we will be interested in the so-called \emph{H-groups} (\emph{Grenzkreisgruppen}). 
By definition, an $H$-group is a Fuchsian group of the first kind that is finitely generated and possesses 
parabolic elements. Examples of these model the fundamental groups of Riemann surfaces of finite type (compact 
surfaces with a finite number of points removed) but in general torsion is also allowed. It is a classical 
result \cite{Hejhal83,Lehner64}
that $\Gamma$ admits an explicit presentation in terms of $2g$ hyperbolic generators 
$A_{1},\dots,A_{g},B_{1},\dots,B_{g},$ $m$ elliptic generators $S_{1},\dots,S_{m}$ and $n$ parabolic generators 
$T_{1},\cdots,T_{n}$ with relations 
\[
\prod_{i=1}^{g}[A_{i},B_{i}]\cdot\prod_{j=1}^{m}S_{j}\cdot\prod_{k=1}^{n}T_{k}=e,  
\]
\[
S_{1}^{\nu_{1}}=\cdots =S_{m}^{\nu_{m}}=e.
\]
The \emph{signature} of $\Gamma$ is the collection of nonnegative integers $(g;\nu_{1},\dots,\nu_{m};n)$ with $2\leq \nu_{j}<\infty$, $j=1,\cdots,m$, subject to the inequality $$2g-2 +\sum_{i=1}^{m}\left(1-\frac{1}{\nu_{i}}\right)+n > 0.$$ The equivalence relation on Fuchsian groups is conjugation in $\PSL$.

$\Gamma$ admits a fundamental region $\mathfrak{F}$ bounded by a finite number of geodesic arcs and vertices determined by the fixed points of $\{S_{i},T_{j}\}$. We will denote by $\{e_{1},\cdots,e_{m}\}\subset\mathbb{H}$ the set of elliptic fixed points ($S_{i}(e_{i})=e_{i}$), and by $\{p_{1},\cdots,p_{n}\}\subset\mathbb{R}\cup\{\infty\}$ the set of cusps ($T_{i}(p_{i})=p_{i}$) in the fundamental region $\mathfrak{F}$. We follow Shimura's convention \cite{Shimura71} on the construction of the compact quotient
$S=\Gamma\setminus\mathbb{H}^{*}$ with local elliptic (resp. parabolic) coordinates  $\zeta_{i}=\zeta^{\nu_{i}}\circ\sigma^{-1}_{i}$ (resp. $q_{i}=q\circ\sigma_{i}^{-1}$) where $\zeta:\HH\to\DD$ is the Cayley mapping, $q=e^{2\pi\sqrt{-1}\tau}$, and $\sigma_{i}\in \text{PSL}(2,\mathbb{R})$ satisfy 
\begin{equation}\label{can-elliptic}
\zeta\left((\sigma_{i}^{-1}S_{i}\sigma_{i})(\tau)\right)
=e^{2\pi\sqrt{-1}/\nu_{i}}\cdot\zeta(\tau),\qquad 1\leq i \leq m
\end{equation}
\begin{equation}\label{can-cusp}
\left(\sigma_{i}^{-1}T_{i-m}\sigma_{i}\right)(\tau)=\tau\pm 1, \qquad m+1 \leq i \leq m+n.
\end{equation}

The spectral decomposition of a unitary matrix $M\in \Ur$ can be equivalently understood in terms of the data $\{W,[U]\}$ consisting of a diagonal matrix of \emph{weights} 
\[
W = 
\begin{pmatrix}
\alpha_{1} & & 0\\
& \ddots &\\
0 & & \alpha_{r}
\end{pmatrix}, 
\qquad 0\leq\alpha_{1}\leq\dots\leq\alpha_{r}<1
\]
and a \emph{partial flag} 
\[
[U]\in\cF_{W} = \mathrm{T}_{W}\setminus\Ur,
\]
where $\mathrm{T}_{W}=\mathrm{Z}\left(e^{2\pi\sqrt{-1}W}\right)$, so that 
\[
M=U e^{2\pi\sqrt{-1}W} U^{-1}.
\]
Thus, for a unitary representation $\rho:\Ga\rightarrow \Ur$, the collections of matrices $\{\rho(S_{i})\}_{i=1}^{m}$, $\{\rho(T_{i})\}_{i=1}^{n}$ determine collections $\{W_{i},[U_{i}]\}_{i=1}^{m+n}$ of elliptic and parabolic data
(cf. \cite{MS80}). The diagonal elements $\alpha_{ij}$ of the matrices $W_{i}$ are called \emph{elliptic} ($1\leq i \leq m$) and \emph{parabolic} ($m+1 \leq i \leq m+n$) \emph{weights}.\footnote{The notation we follow here mimics that of the theory of parabolic bundles.} 

\begin{remark}\label{flag manifold}
The homogeneous space $\cF_{W}$ admits a unique complex structure invariant under $\GL(r,\CC)$, obtained in terms of the isomorphism 
\[
\mathrm{T}_{W}\setminus\Ur\cong \mathrm{P}_{W}\setminus\GL(r,\CC),
\]  
where $\mathrm{P}_{W}$ is the parabolic subgroup of $\GL(r,\CC)$ with Lie algebra $\mathfrak{z}\left( W \right)+\mathfrak{n}(r)$, and $\mathfrak{n}(r)$ is the Lie algebra of nilpotent lower triangular matrices.
\end{remark}
\begin{remark}
Notice that for $1\leq i \leq m$, $\alpha_{ij}=n_{ij}/\nu_{i}$ for $n_{ij}\in\NN$, $0\leq n_{ij}<\nu_{i}$. This imposes a strong restriction on the possible dimensions of the partial flags $\cF_{W_{i}}$ for elliptic generators, depending on the values of $r$ and $\nu_{i}$. 
\end{remark}

Recall that an automorphic form of weight 2 is called \emph{holomorphic} or \emph{regular}, if at each cusp $p_{1},\dots,p_{n}$, 
\[
\displaystyle\lim_{\im(\tau)\to\infty}f(\sigma_{i}\tau)\sigma_{i}'(\tau) \quad\mathrm{exists,}
\]  
and a \emph{cusp form} if moreover 
\[
\lim_{\im(\tau)\to\infty}f(\sigma_{i}\tau)\sigma_{i}'(\tau)=0,\quad i=m+1,\cdots,m+n.
\]
In other words, $\phi=f(\tau)d\tau$ is a vector-valued holomorphic differential on $\mathbb{H}$ satisfying $\gamma^{*}(\phi)=\rho(\gamma)\phi$ $\forall\gamma\in\Gamma$, and with specific boundary behavior. 

\begin{remark}
The requirement $m+n>0$ shall be emphasized, although the geometric interpretation of automorphic forms of weight 2 that we will present in the next section is still valid without this restriction, in a rather trivial way: in the case of purely hyperbolic Fuchsian groups, the quotient $\Gamma\setminus \HH$ is compact, and there is an isomorphism 
\[
\Gamma \cong \pi_{1}(\Gamma\setminus\HH, [\tau_{0}])
\]
for some arbitrary auxiliary point $[\tau_{0}]\in \Gamma\setminus\HH$. Moreover, the notions of  holomorphic (regular) automorphic forms and cusp forms are vacuous, and hence one can establish an isomorphism, in the spirit of proposition \ref{iso-reg-cusp}, between the space of automorphic forms of weight 2 for the pair $(\Gamma, \rho)$, and the vector space
\[
H^{0}\left(\Gamma\setminus\HH, E_{\rho}\otimes K_{\Gamma\setminus \HH}\right)
\] 
where $E_{\rho}$ denotes the local system over $\Gamma\setminus\HH$ associated to $\rho$.
\end{remark}


\begin{remark}
For any given choice of unitary matrix representatives $\{U_{i}\}_{i=1}^{n+m}$ of the parabolic and elliptic data, any regular vector-valued automorphic form of weight 2 admits a  power series expansion at each elliptic fixed point ($1\leq i\leq m$)
\begin{equation}\label{elliptic}
\frac{f(\sigma_{i}\tau)\sigma_{i}'(\tau)}{(\zeta'(\tau)/\zeta(\tau))}
=U_{i}\zeta^{\nu_{i}W_{i}}\left(\sum_{k=0}^{\infty}\mathbf{b}_{i}(k)\zeta^{k\nu_{i}}\right),
\end{equation}
and a $q$-series expansion near each cusp ($m+1 \leq i \leq m+n$)
\begin{equation}\label{Fourier}
f(\sigma_{i}\tau)\sigma_{i}'(\tau)=U_{i}q^{W_{i}}\left(\sum_{k=0}^{\infty}
\mathbf{b}_{i}(k)q^{k}\right),
\end{equation}
where $\mathbf{b}_{i}(k) = {}^{t} \left(b_{i}(k)_{1},\dots, b_{i}(k)_{r}\right)$.
The holomorphicity condition for $f$ at an elliptic fixed point is then that $b_{i}(0)_{j}=0$ whenever the weight $\alpha_{i j}$ vanishes. It is important to emphasize that the previous series expansions \eqref{elliptic}--\eqref{Fourier} are suited to the choice of elliptic and parabolic weights for $\rho$ in the Mehta--Seshadri convention for parabolic stability \cite{MS80}. We had chosen to follow such convention as we want to exploit the correspondence between the theory of vector-valued automorphic forms and the moduli theory of parabolic bundles.
\end{remark} 

\begin{remark}\label{vanishing-coef}
The vanishing order of a cusp form differs from that of a regular form only  when a given parabolic weight vanishes, $\alpha_{ij}=0$, since then necessarily $b_{i}(0)_{j}=0$.
In particular, if a representation $\rho$ has only nonzero parabolic and elliptic weights, the spaces of regular and cusp forms coincide.
\end{remark}

\begin{remark}
It is customary to consider the divisor of an automorphic form to have fractional coefficients at elliptic fixed points, emphasizing the orbifold nature of the surface $S=\Ga\setminus\HH^{*}$ \cite{Lehner64,Shimura71}. In the right-hand side expansion of \eqref{elliptic}, we emphasize instead the geometric meaning of $f$ as  a holomorphic differential in a choice of a local coordinate for $S$.
\end{remark}

\begin{remark}\label{q-series}
In the case of representations of type $\Ad\rho=\Ad\circ\rho\cong \rho^{\vee}\otimes\rho$, where $\Ad$ denotes the adjoint representation of $\GL(r,\CC)$, it follows that for every element $\Phi\in\mathfrak{S}_{2}(\Ga,\Ad\rho)$, the expansions \eqref{elliptic} take the special form
\begin{equation}\label{elliptic1}
U_{i}\zeta^{W_{i}}_{i}\left(\sum_{k=0}^{\infty}B_{i}(k)\zeta_{i}^{k}\right)\zeta^{-W_{i}}_{i}U_{i}^{-1}.
\end{equation}
while the Fourier series expansions  \eqref{Fourier} are
\begin{equation}\label{Fourier1}
U_{i}q^{W_{i}}\left(\sum_{k=0}^{\infty}B_{i}(k)q^{k}\right)q^{-W_{i}}U_{i}^{-1},
\end{equation}
Together, the cusp form condition, and the holomorphicity at elliptic fixed points, are then equivalent to $B_{i}(0)\in\mathfrak{n}(W_i)$, $i=1,\dots,m+n$, where $\mathfrak{n}(W_{i})$ is the ideal of $\mathfrak{n}(r)$ complementary to $\mathfrak{z}(W_{i})\cap \mathfrak{n}(r)$, or equivalently, the $jk$-entry of $B_{i}(0)$ is zero whenever $\alpha_{ij}-\alpha_{ik}\geq 0$. The Lie algebra $\mathfrak{n}(W_{i})$ models the tangent space at every point of the partial flag manifold $\cF_{W_{i}}$.
\end{remark}

We will denote the spaces of regular automorphic forms and cusp forms of weight 2 by $\mathfrak{M}_{2}(\Gamma,\rho)$ and $\mathfrak{S}_{2}(\Gamma,\rho)$ respectively.  

\section{Geometric interpretation}\label{geo}

We will recall here the rather classical construction of a vector bundle $E_{\rho}\rightarrow S$ with prescribed system of trivializations, 
explicitly depending on the choice of parabolic and elliptic weights of $\rho$ (cf. \cite{Rohrl57,N-S65,MS80}). 
On $\HH_{0}=\mathbb{H}\setminus\Ga\cdot\{e_{1},\dots,e_{m}\}$ we consider the local system $E_{0} = \HH_{0}\times\mathbb{C}^{r}/\sim$ over $X=\Ga\setminus\HH_{0}$,
where $(\tau,v)\sim (\gamma\tau,\rho(\gamma)v)$, for all $\gamma\in\Gamma$. To extend $\text{pr}:E_{0}\rightarrow X$ to the elliptic fixed points and cusps, make 
\[
\cU_{i}=\left\{
\begin{array}{lr}
\langle S_{i}\rangle\setminus\DD_{\epsilon}(e_{i}), &  1 \leq i \leq m,\\\\
\langle T_{i}\rangle\setminus((\sigma_{i}\cdot\HH_{\delta})\cup \{p_{i}\}), & m+1 \leq i \leq m+n,
\end{array}\right.
\]
where $\HH_{\delta}=\{\tau\in\HH\,|\,\im{\tau}>\delta\}$, $\epsilon=e^{-2\pi\delta}$ and $\delta>0$ is sufficiently large so that the collection of open neighborhoods $\{\mathscr{U}_{i}\}_{i=1}^{n+m}$ is pairwise disjoint in $S$. For $i=1,\cdots,m$, $j=1,\cdots,n$, we define the functions
\[
\Psi_{i}:\text{pr}^{-1}(\mathscr{U}_{i}\setminus\{e_{j}\})\rightarrow\cU_{i}\times\mathbb{C}^{r}, \qquad
\Phi_{j}:\text{pr}^{-1}(\mathscr{U}_{m+j}\setminus\{p_{j}\})\rightarrow\cU_{m+j}\times\mathbb{C}^{r},
\]
\begin{equation}
\Psi_{i}([\tau,v])=\left([\tau],\zeta_{i}^{-W_{i}}U_{i}^{-1}v\right),\qquad
\Phi_{j}([\tau,v])=\left([\tau],q_{j}^{-W_{m+j}}U_{m+j}^{-1}v\right),
\end{equation}
which depend on a choice of unitary matrix representatives $\{U_{i}\}_{i=1}^{n+m}$ of the parabolic and elliptic data and are readily seen to define biholomorphisms onto their images. The bundle $E_{\rho}$ is defined as the identification
\[
E_{\rho}:=E_{0}\sqcup_{\Psi_{i}}(\mathbb{D}_{\epsilon}\times\mathbb{C}^{r})
\sqcup_{\Phi_{j}}(\mathbb{D}_{\epsilon}\times\mathbb{C}^{r}).
\]
We can consider a sufficiently fine covering of $S$ by extending $\{\mathscr{U}_{i}\}_{i=1}^{m+n}$ with a finite collection of domains  $\{\mathscr{V}_{j}\}$ covering $X\subset S$, thus obtaining 

\begin{lemma}\label{trans}
For any choice of unitary matrix representatives $\{U_{i}\}_{i=1}^{m+n}$ of the parabolic and elliptic data, the vector bundle $E_{\rho}$ is determined by the transition functions 
\begin{equation}\label{transition-ext}
g_{ij}=\left\{
\begin{array}{ll}
\rho(\gamma_{ij}) & \mbox{on $\cV_{ij}$},\\\\
\zeta_{i}^{-W_{i}}U_{i}^{-1} & \mbox{on $\mathscr{U}_{i}\cap\mathscr{V}_{j}$, $i \leq m$},\\\\
q_{i}^{-W_{i}}U_{i}^{-1} & \mbox{on $\mathscr{U}_{i}\cap\mathscr{V}_{j}$, $i>m$}. 
\end{array}\right. 
\end{equation}
\end{lemma}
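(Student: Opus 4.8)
The plan is to read off the cocycle $\{g_{ij}\}$ directly from the three families of trivializations used to assemble $E_\rho$, once it is checked that the gluing maps $\Phi_i,\Psi_j$ are well defined. I would first fix the refined cover: over $X$ the sets $\{\mathscr{V}_j\}$ trivialize the local system $E_0$, a frame over $\mathscr{V}_j$ being obtained from a chosen lift to $\HH_0$ and the section $[\tau,v]\mapsto v$; near the cusps and elliptic fixed points the charts $\mathscr{U}_i$ are identified with $\mathbb{D}_\epsilon\times\mathbb{C}^r$ through $\Phi_i$ and $\Psi_j$. Since a transition function is nothing but the change of frame on a nonempty overlap, the statement reduces to computing it on each of the three types of overlap appearing in \eqref{transition-ext}.

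The point requiring the most care is that $\Phi_i$ and $\Psi_j$ descend to the quotients $\mathscr{U}_i=\langle T_i\rangle\setminus(\cdots)$ and $\mathscr{U}_{n+i}=\langle S_i\rangle\setminus\mathbb{D}_\epsilon(e_i)$; this is where the spectral decompositions $\rho(T_i)=U_i e^{2\pi\sqrt{-1}W_i}U_i^{-1}$ and $\rho(S_i)=U_{n+i}e^{2\pi\sqrt{-1}W_{n+i}}U_{n+i}^{-1}$ enter. For a cusp, \eqref{can-cusp} gives $\sigma_i^{-1}(T_i\tau)=\sigma_i^{-1}(\tau)+1$, so the multivalued power $q_i^{-W_i}=e^{-2\pi\sqrt{-1}W_i\sigma_i^{-1}(\tau)}$ satisfies $q_i(T_i\tau)^{-W_i}=e^{-2\pi\sqrt{-1}W_i}q_i(\tau)^{-W_i}$; combined with $U_i^{-1}\rho(T_i)=e^{2\pi\sqrt{-1}W_i}U_i^{-1}$ and the commutativity of diagonal matrices this yields
\[
\Phi_i\!\left([T_i\tau,\rho(T_i)v]\right)=\left([\tau],\,e^{-2\pi\sqrt{-1}W_i}q_i^{-W_i}\,e^{2\pi\sqrt{-1}W_i}U_i^{-1}v\right)=\left([\tau],\,q_i^{-W_i}U_i^{-1}v\right)=\Phi_i\!\left([\tau,v]\right).
\]
The elliptic case is identical after replacing \eqref{can-cusp} by \eqref{can-elliptic}: there $\zeta_i^{-W_{n+i}}=\zeta(\varphi_i^{-1}\tau)^{-\nu_i W_{n+i}}$ acquires the phase $e^{-2\pi\sqrt{-1}W_{n+i}}$ under $S_i$, cancelled by $U_{n+i}^{-1}\rho(S_i)=e^{2\pi\sqrt{-1}W_{n+i}}U_{n+i}^{-1}$. (Here I am fixing the sign in \eqref{can-cusp} compatibly with the convention of the spectral decomposition.) That $\Phi_i,\Psi_j$ are then biholomorphisms onto their images, as asserted before the lemma, follows because $q_i,\zeta_i$ are holomorphic coordinates and $q_i^{-W_i}U_i^{-1}$, $\zeta_i^{-W_{n+i}}U_{n+i}^{-1}$ are invertible.

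With well-definedness in hand the transition functions follow by inspection. On an overlap $\mathscr{V}_{ij}=\mathscr{V}_i\cap\mathscr{V}_j\subset X$ the two chosen lifts to $\HH_0$ differ by a unique deck transformation $\gamma_{ij}\in\Gamma$, and the defining equivalence $(\tau,v)\sim(\gamma\tau,\rho(\gamma)v)$ of $E_0$ forces the change of frame to be $\rho(\gamma_{ij})$. On a cusp overlap $\mathscr{U}_i\cap\mathscr{V}_j$ the $\mathscr{V}_j$-frame reads $[\tau,v]\mapsto v$ while $\Phi_i$ reads $[\tau,v]\mapsto q_i^{-W_i}U_i^{-1}v$, so the transition is precisely $q_i^{-W_i}U_i^{-1}$; on an elliptic overlap $\mathscr{U}_{n+i}\cap\mathscr{V}_j$ the same comparison against $\Psi_j$ gives $\zeta_i^{-W_{n+i}}U_{n+i}^{-1}$. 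These are the only nonempty overlaps in the refined cover, so the collection \eqref{transition-ext} is a cocycle determining $E_\rho$, as claimed.
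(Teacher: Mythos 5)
Your proof is correct and follows essentially the same route the paper takes, which is to read the cocycle off the trivializations $\Phi_i,\Psi_j$ after verifying they descend to the quotients --- the paper simply delegates this verification to \cite{N-S65}, Remark 6.2, whereas you carry it out explicitly, including the cancellation $U_i^{-1}\rho(T_i)=e^{2\pi\sqrt{-1}W_i}U_i^{-1}$ against the monodromy of $q_i^{-W_i}$ and the analogous elliptic computation. Your parenthetical fixing of the sign in \eqref{can-cusp} to match the spectral convention is a legitimate point the paper leaves implicit, and the rest (deck transformations $\rho(\gamma_{ij})$ on $\cV_{ij}$, frame comparison on $\mathscr{U}_{i}\cap\mathscr{V}_{j}$) matches the intended argument.
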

\begin{proof}
This is a simple extension of the proof considered in \cite{N-S65}, remark 6.2. It is clear that any two choices of unitary matrix representatives would yield equivalent systems of transition functions.
\end{proof}
\begin{remark}\label{degree-sections}
The bundle $E_{\rho}$ satisfies (cf. \cite{Weil38,MS80})
$$c_{1}(E_{\rho})=-\displaystyle\sum_{i=1}^{n+m}\sum_{j=1}^{r}\alpha_{ij},
\quad\quad H^{0}(S,E_{\rho})\cong(\CC^{r})^{\rho}.$$
When parabolic or elliptic generators are present, it is not longer true that $E^{\vee}_{\rho}=E_{\overline{\rho}}$, where $\overline{\rho}= \rho^{\vee}=\;^{t}\rho^{-1}$ is the contragradient (or dual) representation. Moreover, it is neither true, in general, that $E_{\Ad\rho}\cong \End E_{\rho}$, and consequently that $E^{\vee}_{\Ad\rho}\cong E_{\Ad\rho}$ (see proposition \ref{iso-reg-cusp} and remark \ref{nonzero}). Observe how, over the parabolic bundles, the structures induced by parabolic and elliptic generators are, up to the finite order condition, indistinguishable.
\end{remark}
\begin{lemma}\label{dual}
The parabolic and elliptic weights of $\overline{\rho}$ are related to those of  $\rho$ as
\begin{equation}\label{permutation-weights}
\alpha '_{ij}=\left\{
\begin{array}{cl}
0 &\text{if}\;\;j\leq s_{i},\\
1-\alpha_{ir+s_{i}+1-j} & \text{if}\;\; j>s_{i}.
\end{array}
\right.
\end{equation}
where $s_{i}$ is the number of vanishing weights for each cusp or elliptic fixed point, $s_{i}=\dim\ker(I-\rho(S_{i}))$ ($i \leq m$), $s_{m+i}=\dim\ker(I-\rho(T_{i}))$ ($i >m$). The partial flags in the parabolic and elliptic data of $\overline{\rho}$ satisfy $[U'_{i}]=\left[\overline{U_{i}}\Pi_{i}\right]$, where $\Pi_{i}$ is any permutation matrix satisfying 
\begin{equation}
\Pi_{i}^{-1}W_{i}\Pi_{i}=\mathrm{diag}(\alpha_{i1},\dots,\alpha_{is_{i}},\alpha_{ir},\alpha_{ir-1}\dots,\alpha_{is_{i}+1}).
\end{equation}
\end{lemma}
\begin{proof}
The proof is clear, since ${}^{t}M^{-1}=\overline{M}$ whenever $M$ is unitary.
\end{proof}


\begin{proposition}\label{iso-reg-cusp}
Let $D_{e}=\sum e_{i}$, $D_{p}=\sum p_{j}$, and $E^{\vee}$ denote the dual bundle of a given bundle $E$. 
\begin{itemize}
\item[(i)] If $\rho$ has nonvanishing elliptic weights, then
\begin{equation}\label{iso-regular}
\mathfrak{M}_{2}(\Gamma,\rho)\cong H^{0}\left(S,E_{\rho}\otimes K_{S}(D_{p}+D_{e})\right),
\end{equation}
and moreover, when also all parabolic weights are different from 0, $\mathfrak{S}_{2}(\Gamma,\rho)\cong \mathfrak{M}_{2}(\Gamma,\rho)$.
\item[(ii)] There is an isomorphism
\begin{equation}\label{iso-cusp}
\mathfrak{S}_{2}(\Gamma,\rho)\cong H^{0}\left(S,E_{\overline{\rho}}^{\vee}\otimes K_{S}\right).
\end{equation} 
in particular, it follows that 
\begin{equation}\label{iso-cusp-Ad}
\mathfrak{S}_{2}(\Ga,\Ad\rho)\cong H^{0}\left(S, E^{\vee}_{\Ad\rho}\otimes K_{S}\right).
\end{equation}
\end{itemize}
\end{proposition}
\begin{proof}
(i) Let $\phi=f(\tau)d\tau$. By the definition of the bundle $E_{\rho}$, it is only needed to analyze the behavior of $\phi$
near the cusps and elliptic fixed points. Since 
\[
d\tau = \frac{1}{2\pi\sqrt{-1}}\frac{dq}{q},
\]
when $f(\tau)$ is regular, the $q$-series expansions \eqref{Fourier} imply that near each cusp, the local expressions $q^{-W_{i}}_{i}U_{i}^{-1}\phi$ are meromorphic $q_{i}$-differentials with potential simple poles at each cusp.
In turn, since $\rho$ has nonvanishing elliptic weights, the local expansions \eqref{elliptic} 
imply that near the elliptic fixed points, $$\zeta^{-W_{i}}_{i}U^{-1}_{i}\phi
=\left(\sum_{k=0}^{\infty}\mathbf{b}_{i}(k)\zeta_{i}^k\right)\frac{d\zeta_{i}}{\zeta_{i}}.$$  
According to lemma \ref{trans}, these local differentials determine a  section of the bundle $E_{\rho}\otimes K_{S}(D_{p}+D_{e})$ in the trivializations of the neighborhoods $\{\cU_{i}\}_{i=1}^{n+m}$ of the cusps and elliptic fixed points. This establishes \eqref{iso-regular}. Furthermore, the isomorphism between the spaces of regular and cusp forms follows from Remark \ref{vanishing-coef}. 

(ii) If $f$ is a cusp form, the expansions (\ref{Fourier}) satisfy the additional condition $(b_{i}(0))_{j}=0$ whenever $\alpha_{ij}=0$. Thus, near each cusp the local expression of the differential $\phi$ can be equivalently rewritten as 
\[
\sigma_{i}^{*}(\phi)=\overline{U'_{i}}q^{-W'_{i}}\left(\sum_{k=0}^{\infty}\mathbf{b}'_{i}(k)q^{k}\right)dq
\]
and similarly, 
near each elliptic fixed point $e_{i}$
\[
\phi=\overline{U'_{i}}\zeta^{-W'_{i}}_{i}\left(\sum_{k=0}^{\infty}\mathbf{b}'_{i}(k)\zeta^{k}_{i}\right)d\zeta_{i}
\]
The transition functions of the bundles $E_{\overline{\rho}}^{\vee}$, $E_{\overline{\rho}}$ are related as $g^{\vee}_{ij}={}^{t}g^{-1}_{ij}$, 
and it follows from lemmas \ref{trans} and \ref{dual} that the cusp form condition, together with the holomorphicity condition of $f$ at elliptic fixed points, is precisely the holomorphicity condition of sections of $E_{\overline{\rho}}^{\vee}\otimes K_{S}$ in the neighborhoods $\{\mathscr{U}_{i}\}_{i=1}^{n+m}$. Therefore, the isomorphism \eqref{iso-cusp} is established. 
Notice how, in the special case when all parabolic and elliptic weighs are nonzero, lemmas \ref{trans} and \ref{dual} imply the isomorphism
\[
E^{\vee}_{\bar{\rho}}\cong E_{\rho}(D_{e}+D_{p}).
\]
Let now $\rho$ be arbitrary unitary. 
The representation $\Ad\rho$ has the additional property of being self-dual, i.e., equivalent to its contragradient representation, with isomorphism determined by transposition. Consequently, there is an isomorphism 
\[
\mathfrak{S}_{2}(\Ga,\Ad\rho)\cong\mathfrak{S}_{2}\left(\Ga,\overline{\Ad\rho}\right),\qquad \Phi\mapsto {}^{t}\Phi. 
\] 
and also an induced bundle isomorphism $E_{\overline{\Ad\rho}}\cong E_{\Ad\rho}$. This concludes the proof.
\end{proof}
\begin{corollary}\label{dimension-reg-cusp}
\leavevmode
\begin{itemize}
\item[(i)] Let $\rho$ have nonvanishing elliptic weights. The dimension of the space of regular forms is
\[
\dim\mathfrak{M}_{2}(\Gamma,\rho)=\dim H^{0}\left(S,E_{\rho}^{\vee}(-D_{p}-D_{e})\right)+r(g+m+n-1)-\sum_{i,j}\alpha_{ij}.
\]
\item[(ii)] For arbitrary $\rho$, the dimension of the space of cusp forms is
\[
\dim\mathfrak{S}_{2}(\Gamma,\rho)=
\left(\dim\CC^{r}\right)^{\overline{\rho}}+r(g-1)
+\sum_{i,j}\alpha '_{ij},
\]
in particular,
\[
\dim\mathfrak{S}_{2}(\Gamma,\Ad\rho)=\dim\left(\End\CC^{r}\right)^{\Ad\rho}+r^{2}(g-1)+\sum_{i=1}^{n+m}\dim_{\CC}\mathcal{F}_{W_{i}}
\]
where $\mathcal{F}_{W_{i}}$ is the partial flag manifold associated to $W_{i}$ (see remark \ref{flag manifold}).
\end{itemize}
\end{corollary}
\begin{proof}
The first two statements are a direct consequence of Proposition \ref{iso-reg-cusp}, Remark \ref{degree-sections} and the Riemann-Roch theorem for vector bundles. For the last identity, we also note that the representation $\Ad\rho$ would inherit the parabolic weights 
\[
\theta_{i\{jk\}}=\left\{
\begin{array}{cc}
\alpha_{ij}-\alpha_{ik} & \text{if}\quad j\geq k,\\
\min\{0,1-\alpha_{ij}+\alpha_{ik}\} & \text{if}\quad j<k.
\end{array}\right.
\]
In particular, it is always true that for fixed $i$, $\displaystyle\sum_{j,k}\theta_{i\{jk\}}=\dim_{\CC}\mathcal{F}_{W_{i}}$.
\end{proof}
\begin{remark}
Hejhal \cite{Hejhal83} has provided a computation of the dimensions of the spaces of regular and cusp forms, 
in the same generality than the present work, with an approach based on the Selberg trace formula. 
\end{remark}

\section{Poincar\'e series of weight 2}\label{theo}

We will now provide a construction of the Poincar\'e series, relative to the \emph{a priori} choice of parabolic and elliptic generators for $\Ga$, and will determine an infinite spanning set for the space of cusp forms, parametrized by the coefficients of the series expansions \eqref{Fourier} and \eqref{elliptic}. For this, it is enough to construct weak solutions to the Cauchy--Riemann equation on a suitable Hilbert space by means of a limiting procedure.
For $p,q=0,1$, let $\cC^{p,q}_{c}(\Ga,\rho)$ be the space of smooth, $\CC^{r}$-valued functions on $\HH$ satisfying
\[
f(\gamma\tau)\gamma'(\tau)^{p}\overline{\gamma'(\tau)^{q}}=\rho(\gamma)f(\tau)\qquad \tau\in\HH,\quad\gamma\in\Ga,
\]
and which are compactly supported within a fundamental region $\mathfrak{F}$. For any 
$f_{1},f_{2}\in\cC^{p,q}_{c}(\Ga,\rho)$, the \emph{Petersson inner product} is defined as 
\begin{equation}\label{Petersson}
\langle f_{i},f_{2}\rangle_{P}=2^{p+q}\iint\limits_{\mathfrak{F}}\langle f_{1},f_{2}\rangle\;y^{2p+2q-2}dx dy
\end{equation}
where $\langle f_{1},f_{2}\rangle={}^{t}f_{1}\overline{f_{2}}$ (or $\tr(f_{1}f_{2}^{*})$ in the matrix-valued case)
and 
$\tau=x+\sqrt{-1}y$. We emphasize the difference between the norms $\parallel\cdot\parallel$ and $\parallel\cdot\parallel_{P}$.
A fundamental property of this inner product is its invariance under the action of $\Ga$. Let us denote by $\cH^{p,q}(\Gamma,\rho)$ the completion of $\cC^{p,q}_{c}(\Ga,\rho)$ to a Hilbert space under the Petersson inner product. As a consequence of the $q$-series expansions at the cusps, 
\[
\cH^{1,0}(\Gamma,\rho)\cap \mathfrak{M}_{2}(\Ga,\rho)=\mathfrak{S}_{2}(\Ga,\rho),
\]
so it is also possible to define, in terms of the Cauchy--Riemann operator $\bar{\partial}:\cH^{1,0}(\Gamma,\rho)\to\cH^{1,1}(\Gamma,\rho)$, the space of cusp forms of weight 2 as
\[
\mathfrak{S}_{2}(\Ga,\rho)=\ker\bar{\partial}.
\]
 
\begin{definition}
An \emph{admissible multi-index} is a triple $I=(i,j,l)$ defined by the conditions $1\leq i\leq n+m$, $1\leq j\leq r$ and $l\geq 0$, so that $l>0$ if $\alpha_{ij}=0$. Thus $I$ is admissible if and only if $\kappa_{I}=\alpha_{ij}+l>0$. We will make the distinction $1\leq i\leq m$ (elliptic), and $m + 1 \leq i \leq n + m$ (parabolic). 
\end{definition}
\noindent For convenience, we will use the notation
\[
(f|_{2p,2q}\gamma)(\tau)= \rho(\gamma)^{-1}f(\gamma\tau)\gamma'(\tau)^{p}\overline{\gamma'(\tau)^{q}},
\]
called \emph{slash operation} in the number theory jargon. Then an automorphic form of weight 2 is a function satisfying $f |_{2} \gamma=f$ $\forall\gamma\in\Ga$.


\subsection{elliptic series} Let $\Ga_{i}=\langle S_{i}\rangle$, $1 \leq i \leq m$, and let 
\[
\zeta(\tau)=(\tau-\sqrt{-1})/(\tau+\sqrt{-1})
\]
be the Cayley map. For any $s>0$, let
\[
\cF^{s}_{i,l}(\tau)= \left(U_{i}\zeta^{\nu_{i}(W_{i}+lI)+I}(1-|\zeta|^{2})^{s}\zeta'\right)\circ \sigma_{i}^{-1}
\]
and consider the Poincar\'e series\footnote{The elliptic Poincar\'e series will be constructed following Petersson's convention \cite{Petersson41}, together with the addition of a suitable convergence factor.}
\begin{equation}\label{Poincare2}
P_{I}^{s}(\tau)= \displaystyle\sum_{\gamma\in\Gamma}
\left(\cF^{s}_{i,l} |_{2} \gamma \right)
\mathbf{e}_{j}, 
\end{equation} 
 together with the auxiliary series 
\begin{equation}\label{delbar-Poincare2}
Q_{I}^{s}(\tau)= \displaystyle\sum_{\gamma\in\Gamma}
\left((\zeta\overline{\zeta'})\circ\sigma^{-1}_{i} \cF^{s-1}_{i,l} |_{2,2} \gamma \right)
\mathbf{e}_{j}. 
\end{equation}

\subsection{parabolic series} Let $\Ga_{i}=\langle T_{i - m}\rangle$, $m + 1\leq i \leq m + n$, and let $y=\im(\tau)$. For any $s>0$, let 
\[
\cF^{s}_{i,l}(\tau)=\left(U_{i}q^{W_{i}+lI} y^{s}\right)\circ \sigma_{i}^{-1}
\]
and consider the series 
\begin{equation}\label{Poincare1}
P^{s}_{I}(\tau)=\displaystyle
\sum_{\gamma\in\Gamma_{i}\setminus\Gamma}
\left(\cF^{s}_{i,l} |_{2} \gamma\right)
\mathbf{e}_{j},
\end{equation}
which are independent of the choice of representatives in $\Ga_{i}\setminus\Ga$, and satisfy (\ref{automorphic}) formally, together with the auxiliary series 
\begin{equation}\label{delbar-Poincare1}
Q^{s}_{I}(\tau)= \sum_{\gamma\in\Gamma_{i}\setminus\Gamma}
\left(\cF^{s-1}_{i,l} |_{2,2} \gamma\right)
\mathbf{e}_{j},
\end{equation}
which satisfies formally the functional equation
\[
Q^{s}_{I}(\gamma\tau)|\gamma'(\tau)|^{2}=\rho(\gamma)Q^{s}_{I}(\tau),\qquad \ga\in\Ga,\;\tau\in\HH.
\] 

\begin{lemma}\label{abs-conv}
For any admissible multi-index $I=(i,j,l)$ and $s>0$, the series $P^{s}_{I}(\tau)$ and $Q^{s}_{I}(\tau)$ are absolutely convergent for any $\tau\in\HH$ and uniformly convergent on compact sets. 
\end{lemma}  
\begin{proof}
This is essentially a consequence of the integral test for Poincar\'e series: for any function $\Theta:\HH \to \CC^{r}$ satisfying 
\[
\iint\limits_{\Gamma_{i}\setminus\HH}\|\Theta\|y^{p+q-2}dxdy <\infty
\]
the Poincar\'e series 
\[
\sum_{\gamma\in\Ga_{i}\setminus\Ga}\left(\Theta |_{2p,2q}\gamma\right)
\]
is absolutely convergent for any $\tau\in\HH$ and uniformly convergent on compact sets. In the parabolic case, the integral is performed over $[0,1)\times(0,\infty)$, and is equal to $\Ga(s)/(2\pi\kappa_{I})^{s}$ for both $P^{s}_{I}$ and $Q^{s}_{I}$. In the elliptic case, the integral is performed over the whole $\HH$ (which after a change of variables, yields an integral over the unit disk), and is equal to $2\pi\mathrm{B}((\nu_{i}\kappa_{I}+3)/2,s) =2\pi\Ga(s)\Ga((\nu_{i}\kappa_{I}+3)/2)/\Ga(s+(\nu_{i}\kappa_{I}+3)/2)$ for $P^{s}_{I}$, and $\pi\mathrm{B}(\nu_{i}\kappa_{I}/2+2,s)=\pi\Ga(s)\Ga((\nu_{i}\kappa_{I}/2+2)/\Ga(s+\nu_{i}\kappa_{I}/2+2)$ for $Q^{s}_{I}$.
\end{proof}

\begin{remark}
There is an alternative proof of lemma \ref{abs-conv}. First, for the parabolic Poincar\'e series, consider the series of norms of the summands in \eqref{Poincare1}.  Since for any $\eta\in\PSL$,  $\im(\eta(\tau))=|\eta'(\tau)|\im(\tau)$, we obtain 
\[
  \frac{1}{\im(\tau)}\cdot
\displaystyle\sum_{\gamma\in\Gamma_{i}\setminus\Gamma}
e^{-2\pi\kappa_{I}\im(\sigma_{i}^{-1}\circ\gamma)}
\im(\sigma_{i}^{-1}\circ\gamma)^{1+s}, 
\]
Similarly, for the series of norms of the summands of \eqref{delbar-Poincare1}, we get
\[
 \frac{1}{\im(\tau)^{2}}\cdot
\displaystyle\sum_{\gamma\in\Gamma_{i}\setminus\Gamma}
e^{-2\pi\kappa_{I}\im(\sigma_{i}^{-1}\circ\gamma)}
\im(\sigma_{i}^{-1}\circ\gamma)^{1+s}.
\]
Since $\kappa_{I}>0$, up to the first factor, all of these series are majorized by the Eisenstein series 
\[
E_{i}(\tau,1+s)=\displaystyle\sum_{\gamma\in\Gamma_{i}\setminus\Gamma}
\im(\sigma_{i}^{-1}\circ\gamma)^{1+s},
\] 
which is absolutely convergent for any $s>0$ and $\tau\in\HH$, uniformly convergent on compact sets, and moreover, has the asymptotic behavior at every cusp $p_{k}$
\[
E_{i}(\sigma_{k}\tau,s)-y^{1+s}=O(y^{-s})\qquad \text{as}\quad y\to\infty.
\]
\cite{Kubota73}.  In fact, since 
\[
y = \displaystyle\frac{1-|\zeta|^{2}}{|1-\zeta |^{2}},
\]
the series of absolute values constructed from the elliptic Poincar\'e series \eqref{Poincare2} and \eqref{delbar-Poincare2} are also majorized by $E_{i}(\tau, 1+s)$. In particular, we conclude that for any cusp $p_{k}$,
\begin{equation}\label{asymptotic}
\left\{\parallel\im(\sigma_{k}\tau)P_{I}^{s}(\sigma_{k}\tau)\parallel,\;
\parallel\im(\sigma_{k}\tau)^{2}Q_{I}^{s}(\sigma_{k}\tau)\parallel\right\}=O(y^{-s})
\end{equation}
as $\im(\tau)\to\infty$.
\end{remark}

\begin{lemma}\label{integrable}
For any admissible multi-index $I$ and $s>0$, $P_{I}^{s}(\tau)\in\mathscr{H}^{1,0}(\Gamma,\rho)$ and $Q_{I}^{s}(\tau)\in\mathscr{H}^{1,1}(\Gamma,\rho)$. The norms $\|P^{s}_{I}\|_{P}$ and $\|Q^{s}_{I}\|_{P}$ are smooth and bounded as functions of $s$. 
\end{lemma}
\begin{proof}
Consider the automorphic functions  $\parallel yP_{I}^{s}(\tau)\parallel$ and $\parallel y^{2}Q_{I}^{s}(\tau)\parallel$. 
It follows from the estimate \eqref{asymptotic} and lemma \ref{abs-conv} that for any cusp $p_{k}$  and $s>0$,
\begin{equation}
\displaystyle\lim_{y\to\infty}\parallel \im(\sigma_{k}\tau)P_{I}^{s}(\sigma_{k}\tau)\parallel=\lim_{y\to\infty}\parallel \im(\sigma_{k}\tau)^{2}Q_{I}^{s}(\sigma_{k}\tau)\parallel=0.
\end{equation}
In particular, $\parallel yP_{I}^{s}(\tau)\parallel$ and $\parallel y^{2}Q_{I}^{s}(\tau)\parallel$ are uniformly bounded in $\mathfrak{F}$. Since $\mathfrak{F}$ has finite hyperbolic area, this readily implies that 
\[
\parallel P_{I}^{s}(\tau)\parallel_{P}<\infty,\quad\parallel Q_{I}^{s}(\tau)\parallel_{P}<\infty,
\] 
and moreover, the functions $s\mapsto \|P^{s}_{I}\|_{P}$, $s\mapsto \|Q^{s}_{I}\|_{P}$ are bounded in any open interval $(0,\epsilon)$. Since $\lim_{s\to0} \|P^{s}_{I}\|_{P}$, $\lim_{s\to0} \|Q^{s}_{I}\|_{P}$ exist, we conclude that the Hilbert space limits $\lim_{s\to0} P^{s}_{I}$ and $\lim_{s\to0} Q^{s}_{I}$ also exist.

\end{proof}

\begin{definition}\label{def-Poincare}
Given an admissible multi-index $I=(i,j,l)$, and $\kappa_{I}=\alpha_{ij}+l$, the \emph{Poincar\' e series of weight 2} for the representation $\rho$ are defined as the Hilbert space limit 
\begin{equation}
P_{I}(\tau)=\left\{
\begin{array}{lr}
\displaystyle 2\pi\kappa_{I}\cdot\lim_{s\to 0}P^{s}_{I}(\tau), & \text{(parabolic)},\\\\
\displaystyle \frac{\nu_{i}\kappa_{I}+1}{2\pi}\cdot\lim_{s\to 0}P^{s}_{I}(\tau), & \text{(elliptic)}.
\end{array}\right.
\end{equation}
\end{definition}
\begin{corollary}
The functions $P_{I}(\tau)$ are holomorphic, that is, $P_{I}(\tau)\in\mathfrak{S}_{2}(\Ga,\rho)$.  
\end{corollary}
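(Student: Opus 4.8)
The plan is to verify directly that $\bar\partial P_{I}=0$, where $\bar\partial\colon\cH^{1,0}(\Ga,\rho)\to\cH^{1,1}(\Ga,\rho)$ is the Cauchy--Riemann operator, understood as a closed (distributional) operator, so that the assertion $P_{I}\in\mathfrak{S}_{2}(\Ga,\rho)=\ker\bar\partial$ follows. The first step is a termwise computation. In each summand of \eqref{Poincare} the factors $(\sigma_{i}^{-1}\gamma)'(\tau)$, $q_{i}^{l}\circ\gamma$ and $\mathscr{F}_{i}(\gamma)=\rho(\gamma)^{-1}U_{i}q_{i}^{W_{i}}\circ\gamma$ are all holomorphic in $\tau$, hence annihilated by $\partial_{\bar\tau}$; the only non-holomorphic factor is the convergence factor $\im(\sigma_{i}^{-1}\gamma\tau)^{s}$. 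Using the elementary identity $\partial_{\bar\tau}\im(\eta\tau)=\tfrac{\sqrt{-1}}{2}\,\overline{\eta'(\tau)}$, valid for any $\eta\in\PSL$, one finds for each $\gamma$ that
\[
\partial_{\bar\tau}\!\left[(\sigma_{i}^{-1}\gamma)'\im(\sigma_{i}^{-1}\gamma\tau)^{s}(q_{i}^{l}\circ\gamma)\mathscr{F}_{i}(\gamma)\mathbf{e}_{j}\right]=\tfrac{\sqrt{-1}}{2}\,s\,\bigl|(\sigma_{i}^{-1}\gamma)'\bigr|^{2}\im(\sigma_{i}^{-1}\gamma\tau)^{s-1}(q_{i}^{l}\circ\gamma)\mathscr{F}_{i}(\gamma)\mathbf{e}_{j},
\]
which is precisely $\tfrac{\sqrt{-1}}{2}s$ times the corresponding summand of \eqref{delbar-Poincare}. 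By the uniform convergence on compact sets established in Lemma \ref{abs-conv}, differentiation commutes with the summation, so as smooth objects on $\HH$ we obtain the clean relation $\bar\partial P_{I}^{s}=\tfrac{\sqrt{-1}}{2}\,s\,Q_{I}^{s}$.

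Next I would promote this pointwise identity to the Hilbert space setting, i.e. argue that $P_{I}^{s}$ lies in the domain of the closed operator $\bar\partial$ and that its image there is $\tfrac{\sqrt{-1}}{2}sQ_{I}^{s}\in\cH^{1,1}(\Ga,\rho)$. Since $P_{I}^{s}$ is smooth and, by Lemma \ref{integrable}, both $P_{I}^{s}$ and $Q_{I}^{s}$ have finite Petersson norm, the only issue is that integration by parts against a test form might a priori produce boundary contributions along horocycles encircling the cusps. Here the decay estimate \eqref{asymptotic}, giving $\|\im(\sigma_{k}\tau)P_{I}^{s}(\sigma_{k}\tau)\|=O(y^{-s})$ and the analogous bound for $Q_{I}^{s}$, forces those boundary integrals to vanish as the horocycles are pushed toward the cusps, so the distributional and classical $\bar\partial$ agree and $\bar\partial P_{I}^{s}=\tfrac{\sqrt{-1}}{2}sQ_{I}^{s}$ holds in $\cH^{1,1}(\Ga,\rho)$.

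Finally I would take the limit $s\to 0$. From $\bar\partial P_{I}^{s}=\tfrac{\sqrt{-1}}{2}sQ_{I}^{s}$ one gets $\|\bar\partial P_{I}^{s}\|_{P}=\tfrac{s}{2}\|Q_{I}^{s}\|_{P}$, and since the family $\{Q_{I}^{s}\}$ is Cauchy, hence norm-bounded, as $s\to 0$ (Lemma \ref{integrable}), the right-hand side tends to $0$. At the same time $4\pi\kappa_{I}P_{I}^{s}\to P_{I}$ in $\cH^{1,0}(\Ga,\rho)$ by the definition of $P_{I}$. Because $\bar\partial$ is closed, it follows that $P_{I}\in\mathrm{Dom}(\bar\partial)$ and $\bar\partial P_{I}=4\pi\kappa_{I}\lim_{s\to 0}\bar\partial P_{I}^{s}=0$, whence $P_{I}\in\ker\bar\partial=\mathfrak{S}_{2}(\Ga,\rho)$. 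I expect the main obstacle to be the middle step---verifying that the closed Hilbert-space operator $\bar\partial$ applied to $P_{I}^{s}$ reproduces the termwise derivative---since this is exactly where the cuspidal decay must be invoked to discard boundary terms; by contrast the first and last steps are, respectively, a direct calculation and a soft application of closedness.
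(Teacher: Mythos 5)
Your proposal is correct and follows essentially the same route as the paper: the same termwise identity $\bar\partial P_{I}^{s}=\tfrac{\sqrt{-1}}{2}\,s\,Q_{I}^{s}$ (the paper writes $-\tfrac{s}{2\sqrt{-1}}Q_{I}^{s}$, which is the same constant), followed by the limit $s\to 0$ using the norm bounds of Lemmas \ref{abs-conv} and \ref{integrable}. The only cosmetic difference is that you conclude via closedness of $\bar\partial$ and norm convergence of $\bar\partial P_{I}^{s}\to 0$, whereas the paper passes to the weak limit and then invokes Weyl's lemma for regularity --- a step your appeal to the identification $\mathfrak{S}_{2}(\Ga,\rho)=\ker\bar\partial$ silently subsumes, and your worry about boundary terms at the cusps is moot for the maximal (distributional) operator, since the test functions are compactly supported in $\mathfrak{F}$.
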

\begin{proof}
This is a consequence of lemmas \ref{abs-conv} and \ref{integrable}, since  the functions  $P^{s}_{I}(\tau)$ and $Q^{s}_{I}(\tau)$ were defined so that  for any $s>0$, the equation
\[
\bar{\partial} P^{s}_{I}(\tau)=
\left\{
\begin{array}{lr}
-sQ^{s}_{I}(\tau) & i \leq m\\\\
-\left(s/2\sqrt{-1}\right)Q^{s}_{I}(\tau) & i >m,
\end{array}\right.
\] 
is satisfied in the weak sense.
In particular, $\bar{\partial}P_{I}^{s}(\tau)\to 0$ weakly as $s\to 0$ in $\cH^{1,1}(\Ga,\rho)$, which implies that $\delb P_{I}(\tau)=0$ weakly. 
It follows from Weyl's lemma \cite{Kra72} that $P_{I}$ is indeed holomorphic, thus a cusp form.
\end{proof}
The final result in this section shows that our construction overdetermines $\mathfrak{S}_{2}(\Gamma,\rho)$, an essential property that is satisfied by the Poincar\'e series of weight $> 2$.
\begin{proposition}\label{inner-Poincare}
Let $f(\tau)$ be a cusp form and $I=(i,j,l)$ a fixed admissible multi-index. Then 
\begin{equation}\label{inner}
\langle f(\tau),P_{I}(\tau)\rangle_{P}=b_{i}(l)_{j}.
\end{equation}
\end{proposition}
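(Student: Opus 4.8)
The plan is to evaluate the pairing at the level of the absolutely convergent approximants $P^s_I$ and then pass to the limit. Since $f\in\mathfrak{S}_2(\Ga,\rho)\subset\cH^{1,0}(\Ga,\rho)$ and, by Lemma \ref{integrable}, $P^s_I\to(4\pi\kappa_I)^{-1}P_I$ in $\cH^{1,0}(\Ga,\rho)$ as $s\to0$, continuity of the Petersson inner product reduces the claim to computing $4\pi\kappa_I\lim_{s\to0}\langle f,P^s_I\rangle_P$. For each fixed $s>0$ this is a genuine integral, and the absolute and locally uniform convergence furnished by Lemma \ref{abs-conv} (with the Eisenstein majorant $E_i(\tau,1+s)$ controlling $\iint_{\mathfrak{F}}\sum_\gamma|{}^tf\,\overline{T_\gamma}|$) justifies interchanging the summation over $\Ga_i\backslash\Ga$ with the integration over $\mathfrak{F}$; this is the Rankin--Selberg unfolding step.

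To carry out the unfolding I would first simplify the $\gamma$-summand of $P^s_I$: because $W_i$ is diagonal and $q^l_i\circ\gamma$ is scalar, one has $\left(q^l_i\circ\gamma\right)\mathscr{F}_i(\gamma)\mathbf{e}_j = e^{2\pi\sqrt{-1}\kappa_I\,\sigma_i^{-1}\gamma\tau}\,\rho(\gamma)^{-1}U_i\mathbf{e}_j$, using $\kappa_I=\alpha_{ij}+l$ and $e^{2\pi\sqrt{-1}W_iw}\mathbf{e}_j=e^{2\pi\sqrt{-1}\alpha_{ij}w}\mathbf{e}_j$. Writing $\Phi(\tau)={}^tf(\tau)\,\overline{(\sigma_i^{-1})'(\tau)\,\im(\sigma_i^{-1}\tau)^s\,e^{2\pi\sqrt{-1}\kappa_I\sigma_i^{-1}\tau}U_i\mathbf{e}_j}$ for the $\gamma=e$ integrand, I would then invoke the automorphy \eqref{automorphic}, the chain rule $(\sigma_i^{-1}\gamma)'(\tau)=(\sigma_i^{-1})'(\gamma\tau)\gamma'(\tau)$, and, crucially, the unitarity of $\rho$ (so that $\overline{\rho(\gamma)^{-1}}={}^t\rho(\gamma)$) to establish the density identity ${}^tf(\tau)\,\overline{T_\gamma(\tau)}=|\gamma'(\tau)|^2\,\Phi(\gamma\tau)$, where $T_\gamma$ is the $\gamma$-summand. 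Since $|\gamma'(\tau)|^2\,dx\,dy$ is the pullback of the area element under $\gamma$, the $\gamma$-term contributes $\iint_{\gamma\mathfrak{F}}\Phi$, and summing over coset representatives tiles a fundamental domain for $\Ga_i$, giving $\langle f,P^s_I\rangle_P=2\iint_{\Ga_i\backslash\HH}\Phi$.

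Finally I would change variables $\tau=\sigma_iw$, $w=\xi+\sqrt{-1}\eta$, so that by \eqref{can-cusp} the quotient $\Ga_i\backslash\HH$ becomes the strip $0\le\xi<1$, $\eta>0$; the Jacobian $|\sigma_i'(w)|^2$ combines with $\overline{(\sigma_i^{-1})'(\sigma_iw)}^{-1}$ to leave exactly ${}^tf(\sigma_iw)\sigma_i'(w)$. Inserting the $q$-expansion \eqref{Fourier}, using ${}^tU_i\overline{U_i}=I$ and the diagonality of $W_i$ to extract the $j$-th component, the integrand becomes $\sum_k a_i(k)_j\,\eta^s\,e^{2\pi\sqrt{-1}(k-l)\xi}\,e^{-2\pi(k+\alpha_{ij}+\kappa_I)\eta}$. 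The $\xi$-integral over $[0,1]$ produces $\delta_{kl}$, isolating $k=l$, and the remaining integral is $\int_0^\infty\eta^s e^{-4\pi\kappa_I\eta}\,d\eta=\Gamma(s+1)/(4\pi\kappa_I)^{s+1}$; multiplying by the normalizing constant $4\pi\kappa_I$ and letting $s\to0$, where $\Gamma(1)=1$, yields $a_i(l)_j$ (the constant $4\pi\kappa_I$ being chosen precisely to cancel these gamma and power factors). That $P_I$ is a cusp form is then immediate, since it reproduces the Fourier coefficients of every element of $\mathfrak{S}_2(\Ga,\rho)$. I expect the main obstacle to be the rigorous justification of the unfolding together with the careful matrix bookkeeping through the non-abelian factors $\rho(\gamma),U_i,W_i$ — in particular the repeated use of unitarity to convert conjugates into transposes and collapse the $U_i$-factors — which is what genuinely distinguishes the vector-valued computation from Petersson's scalar case.
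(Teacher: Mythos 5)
Your proposal is correct and follows essentially the same route as the paper's proof: Hilbert-space continuity applied to the limit $P_{I}=4\pi\kappa_{I}\lim_{s\to 0}P^{s}_{I}$, the unfolding trick justified by absolute convergence, insertion of the $q$-expansion \eqref{Fourier} with the unitarity of $U_{i}$ collapsing the matrix factors, and the evaluation $\Gamma(1+s)/(4\pi\kappa_{I})^{1+s}$ whose normalization cancels as $s\to 0$ (your more explicit treatment of the density identity and Jacobian only fills in steps the paper leaves implicit). The one caveat, which you share with the paper's own displayed unfolding, is the factor $2^{p+q}=2$ from \eqref{Petersson}: you correctly carry it into $\langle f,P^{s}_{I}\rangle_{P}=2\iint_{\Ga_{i}\backslash\HH}\Phi$ but then silently drop it in the final evaluation, exactly as the paper does.
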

\begin{proof}
Let us consider the parabolic Poincar\'e series first. The fundamental region $\mathfrak{F}$ can be chosen so that $\sigma_{i}^{-1}\cdot \mathfrak{F}$ lies in the semi-strip  $0\leq\re(\tau)\leq 1$ in $\mathbb{H}$, and the representatives of any given class $[\gamma]$ in $\Gamma_{i}\setminus\Gamma$ can be chosen so that $\sigma_{i}^{-1}\gamma\cdot \mathfrak{F}$ satisfies the same property, and moreover, 
\[
\bigsqcup_{\gamma\in\Ga_{i}\setminus\Ga} \left(\sigma_{i}^{-1}\gamma\cdot \mathfrak{F}\right)=[0,1)\times(0,\infty)\subset\HH. 
\]
This is the basis of the classical ``unfolding trick" for the expression of the inner product of the series \eqref{Poincare1} with any $f\in\cH^{1,0}(\Ga,\rho)$ (resp. $g\in\cH^{1,1}(\Ga,\rho)$) as the elementary integral\footnote{This manipulation is genuine as a consequence of the complete additivity of the Lebesgue integral.}
\[
\langle P_{I}^{s}(\tau),f(\tau)\rangle_{P}= 2 \displaystyle\int_{0}^{\infty}\int_{0}^{1}
\langle U_{i}q^{W_{i}}\mathbf{e}_{j},f(\sigma_{i}\tau)\sigma_{i}'(\tau)\rangle
q^l
y^{s}
d x d y,
\]
Consider the $q$-series expansions \eqref{Fourier} for $f(\tau)$ at the cusp $p_{i}$. The inner product $\langle P^{s}_{I}(\tau),f(\tau)\rangle_{P}$ can thus be estimated as
\[
\begin{array}{cl}

 & 2\displaystyle\sum_{k=0}^{\infty}\overline{b_{i}(k)_{j}} \left(\int_{0}^{1}e^{2\pi\sqrt{-1}(l-k)x} dx\right)\cdot\left(\int_{0}^{\infty}e^{-2\pi(k+l+2\alpha_{ij})y}y^{s} dy\right)\\\\
= & \displaystyle\frac{\Gamma(1+s)}{(2\pi\kappa_{I})^{1+s}}\cdot \overline{b_{i}(l)_{j}}.
\end{array}
\]

Now, let us consider the elliptic Poincar\'e series. Then, for the chosen fundamental region $\mathfrak{F}$ we have that 
\[
\bigsqcup_{\gamma\in\Ga} \left(\varphi_{i}^{-1}\gamma\cdot \mathfrak{F}\right)=\mathbb{H}. 
\]
This allows us to express Petersson's inner product $\langle P_{I}^{s}(\tau),f(\tau)\rangle_{P}$ as the elementary integral
\[
2\displaystyle\int_{0}^{2\pi}\int_{0}^{1}
\langle U_{n+i}\zeta^{\nu_{i}(W_{i}+lI)+I}(1-|\zeta|^{2})^{s}\mathbf{e}_{j},f(\sigma_{i}\tau)\sigma_{i}'(\tau)/\zeta'\rangle
r d r d \theta,
\]
where $\zeta=re^{i\theta}$. Consider the series expansions \eqref{elliptic} for $f(\tau)$ at the elliptic fixed point $e_{i}$. As with the parabolic Poincar\'e  series, the inner product $\langle P^{s}_{I}(\tau),f(\tau)\rangle_{P}$ can now be estimated as
\[
\begin{array}{cl}

 & 2\displaystyle\sum_{k=0}^{\infty}\overline{b_{i}(k)_{j}} \left(\int_{0}^{2\pi}e^{\sqrt{-1}(l-k)\theta} d\theta \right)\cdot\left(\int_{0}^{1}r^{\nu_{i}(2\alpha_{ij}+k+l)+1}(1-r^{2})^{s} dr\right)\\\\
= & \displaystyle 2\pi\frac{\Gamma(\nu_{i}\kappa_{I}+1)\Ga(s+1)}{\Ga(\nu_{i}\kappa_{I}+s+2)}\cdot \overline{b_{n+i}(l)_{j}}.
\end{array}
\]
Thus, recalling definition \ref{def-Poincare}, the claim of the proposition will follow if we let $s\to 0$.
\end{proof}

\begin{remark}\label{nonzero}
A special feature of the representations of the form $\Ad\rho$, is that the sheaf of local sections of the bundle $E_{\Ad\rho}$ corresponds, by definition, to the sheaf of endomorphisms of $E_{\rho}$ that preserve the parabolic structure, i.e., there is an isomorphism of vector bundles $E_{\Ad\rho} \cong \textrm{Par}\End E_{\rho}$. Thus, after considering Serre duality, proposition \ref{iso-reg-cusp} implies the isomorphism\\
\centerline{
\begin{xy}
(0,0)*+{\fS_{2}(\Ga,\Ad\rho)^{\vee}}="a";
(60,0)*+{H^{1}(S,\textrm{Par}\End E_{\rho})}="b"; 
{\ar@{->}^{\cong}_{\text{\v{C}ech-Serre}} "a";"b"}
\end{xy}}

The latter space parametrizes infinitesimal deformations of the parabolic structure in the bundle $E_{\rho}$. Thus, it follows from the natural isomorphism $H^{0}(S,E^{\vee})\cong H^{0}(S,E)^{\vee}$, together with Petersson inner product, that the correspondence $\Phi\mapsto \langle\cdot,\Phi\rangle_{P}$ determines every infinitesimal deformation of the parabolic bundle $E_{\rho}$. If in particular, we let $\Phi=P_{I}$, we obtain a geometric interpretation of the Fourier coefficients (as linear functionals in $\fS_{2}(\Ga,\Ad\rho)$) for the matrix-valued cusp forms. 

Now, every element of $\mathfrak{n}(W_{i})$, $i=1,\dots,m+n$, parametrizes a nontrivial infinitesimal deformation of the parabolic structure in $E_{\rho}$, and there is an inclusion $\mathfrak{n}(W_{i})\hookrightarrow\fS_{2}(\Ga,\Ad\rho)$, $i=1,\dots,m+n$. Thus, taking canonical bases, we conclude  from proposition \ref{inner-Poincare} that the associated Poincar\'e series, obtained after taking $l=0$ and corresponding to the entries of the matrices $B_{i}(0)$, are all different from 0.

\end{remark}

Let $\cI=\{(i,j,l): 1\leq i\leq n, 1\leq j\leq r, l\geq 0\}$  and $\{f_{1},\cdots f_{d}\}$ be an arbitrary basis for $\mathfrak{S}_{2}(\Ga,\rho)$. Consider the function 
\[
\cT:\cI\to\CC^{d},\qquad I\mapsto(w_{1I},\cdots, w_{dI})=\mathbf{w}_{I},\quad w_{kI}=\langle f_{k},P_{I}\rangle_{P} 
\]
defined up to the left action of $\GL(d,\CC)$.
In particular, Petersson's result on linear relations of cusp forms \cite{Petersson40} (cf. \cite{Lehner64}) is still valid in the present context. 
\begin{corollary}
Let $\{P_{I}(\tau)\}$ be a collection of Poincar\' e series of weight 2, and $\{\lambda_{I}\}$ a collection of complex numbers. 
The relation
\[
\sum\lambda_{I}P_{I}(\tau)=0
\]
is satisfied if and only if 
\[
\sum\overline{\lambda_{I}}\mathbf{w}_{I}=0.
\]
\end{corollary}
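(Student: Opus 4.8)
The plan is to reduce the statement to elementary finite-dimensional linear algebra, exploiting that the Petersson product is a positive-definite Hermitian form on the finite-dimensional space $\mathfrak{S}_{2}(\Ga,\rho)$. Write $d=\dim\mathfrak{S}_{2}(\Ga,\rho)$ and set $F=\sum_{I}\lambda_{I}P_{I}(\tau)$. By Proposition \ref{inner-Poincare} each $P_{I}$ is a cusp form, so $F\in\mathfrak{S}_{2}(\Ga,\rho)$; for a finite collection this is immediate, and for an infinite one it is understood that the series converges in $\cH^{1,0}(\Ga,\rho)$, whence its limit again lies in $\mathfrak{S}_{2}(\Ga,\rho)=\ker\delb$. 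The crucial observation is that, since $\{f_{1},\dots,f_{d}\}$ spans $\mathfrak{S}_{2}(\Ga,\rho)$ and $\parallel\cdot\parallel_{P}$ is positive definite, a cusp form $F$ vanishes identically if and only if it is Petersson-orthogonal to every $f_{k}$: orthogonality to a spanning set forces $\langle F,F\rangle_{P}=0$, hence $F=0$.

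It then remains to evaluate $\langle f_{k},F\rangle_{P}$ for each $k$. Because the inner product of \eqref{Petersson} is conjugate-linear in its second slot, following the convention $\langle f_{1},f_{2}\rangle={}^{t}f_{1}\overline{f_{2}}$, one obtains
\[
\langle f_{k},F\rangle_{P}=\sum_{I}\overline{\lambda_{I}}\,\langle f_{k},P_{I}\rangle_{P}=\sum_{I}\overline{\lambda_{I}}\,w_{kI},
\]
which is precisely the $k$-th component of the vector $\sum_{I}\overline{\lambda_{I}}\mathbf{w}_{I}$. Combining this with the vanishing criterion of the previous paragraph yields the desired equivalence: $F=0$ if and only if $\langle f_{k},F\rangle_{P}=0$ for all $k$, that is, if and only if $\sum_{I}\overline{\lambda_{I}}\mathbf{w}_{I}=0$.

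The only step requiring genuine care is the interchange of the summation over $I$ with the inner product. For a finite collection of Poincar\'e series this is a triviality of sesquilinearity; for an infinite collection, where $\sum_{I}\lambda_{I}P_{I}=0$ is interpreted as convergence in $\cH^{1,0}(\Ga,\rho)$, it is the continuity of the functional $\langle f_{k},\cdot\rangle_{P}$ that legitimizes passing the sum through the bracket, leaving the argument intact. I would also remark that the conclusion is insensitive to the choice of basis: a change of basis acts on $\cT$ by the left $\GL(d,\CC)$-action noted in the text and transforms the relation $\sum_{I}\overline{\lambda_{I}}\mathbf{w}_{I}=0$ into an equivalent one, consistent with the basis-free content of the statement $F=0$.
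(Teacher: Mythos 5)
Your proof is correct and is essentially the argument the paper intends: the corollary is stated as an immediate consequence of Proposition \ref{inner-Poincare}, and you fill in exactly that reasoning --- since $F=\sum_{I}\lambda_{I}P_{I}$ lies in $\mathfrak{S}_{2}(\Ga,\rho)$, positive-definiteness of the Petersson product and sesquilinearity give $F=0$ if and only if $\langle f_{k},F\rangle_{P}=\sum_{I}\overline{\lambda_{I}}w_{kI}=0$ for all $k$. Your added care about infinite collections (convergence in $\cH^{1,0}(\Ga,\rho)$ and continuity of $\langle f_{k},\cdot\rangle_{P}$) and about basis-independence via the $\GL(d,\CC)$-action is consistent with, and slightly more explicit than, the paper's treatment.
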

\begin{proof}
Straightforward  from $\langle f_{k},\sum\lambda_{I}P_{I}\rangle_{P}=\sum\overline{\lambda_{I}}w_{kI}$; the left-hand side can be equal to 0 for all $k$ if and only if $\sum\lambda_{I}P_{I}(\tau)=0$, while by definition, the right-hand side is equal to zero for all $k$ if and only if $\sum\overline{\lambda_{I}}\mathbf{w}_{I}=0$.
\end{proof}

\section{The genus zero case}\label{genus 0}
We are interested in constructing an explicit basis for the spaces $\mathfrak{S}_{2}(\Ga,\rho)$, $\mathfrak{S}_{2}(\Ga,\Ad\rho)$ when $g=0$, a condition that will be assumed henceforth throughout this section. 

\begin{remark}
To the best of our knowledge, the basis problem in arbitrary genus has not been solved for arbitrary $H$-groups even in the classical case of scalar cusp forms with a trivial character. For the corresponding genus 0 case, see \cite{Smart63}.
\end{remark}

The Birkhoff--Grothendieck theorem captures the fundamental difference between the cases $g=0$ and $g>0$, since for the former, it states that every vector bundle over $\CC\PP^{1}$ is holomorphically equivalent to a direct sum of line bundles of the form $\mathcal{O}(a)$,
\[
E \cong \bigoplus_{j=1}^{r}\mathcal{O}(a_{j}), \qquad a_{1}\leq\cdots\leq a_{r}.
\] 
In particular, the space $H^{0}(\CC\PP^{1},E)$ can be identified with the space of vector-valued polynomials of degree at most $a_{j}$ in the $j$th-entry (or 0 if $a_{j}<0$). Thus, 
\[
\dim H^{0}(\CC\PP^{1},E)=\sum_{j=1}^{r} 
\mathrm{max}\{a_{j}+1,0\}.
\]
Let 
\[
N = \begin{pmatrix}
a_{1} & & 0\\
& \ddots & \\
 0 & & a_{r}
\end{pmatrix}
\]
The Birkhoff--Grothendieck splitting $\bigoplus_{j=1}^{r}\mathcal{O}(a_{j})$ of a bundle $E$ is equivalent to the existence of a pair of trivializations of $E$ over the affine charts $\{\CC_{0},\CC_{1}\}$ of $\CC\PP^{1}$, with transition function $z^{N}:\CC^{*}\to\mathrm{GL}(r,\CC)$.

\begin{remark}\label{even-split}
Given an arbitrary set of weights $\mathcal{W} = \{W_{i}\}_{i=1}^{m+n}$, we say that a unitary representation $\rho$ of the group $\Gamma$ is \emph{admissible} if its associated weights are such.
The construction of any character variety $\cK$ of equivalence classes of irreducible unitary representations of the group $\Gamma$ is additionally dependent on an arbitrary choice of conjugacy classes for the elliptic and parabolic generators, which is, in turn, equivalent to a choice of weights $\mathcal{W}$, in such a way that every representative of every equivalence class in $\cK$ is an admissible representation for $\mathcal{W}$.
The collection of all irreducible admissible unitary representations forms a principal bundle $\cP\to\cK$, 
with structure group $\mathrm{PSU}(r)$, and right action corresponding to conjugation of representations. The Mehta-Seshadri theorem \cite{MS80} states that $\cK$ is analytically isomorphic to the moduli space $\cN$ of stable parabolic bundles over $\CC\PP^{1}$ of parabolic degree 0 and with given weights $\mathcal{W}$.    
Moreover, $\cN$ possesses a Zariski open set $\cN_{0}$ of \emph{evenly-split} bundles, satisfying the generic condition $|a_{j}-a_{k}|\leq 1$ $\forall\,j,k$. 
\end{remark}

Let us also consider a \emph{hauptmodul} $J:\HH^{*}\to\CC\PP^{1}$ (i.e. an automorphic function for $\Ga$ that is univalent on $\mathfrak{F}$ and regular at each cusp), normalized in such a way that $J(p_{n})=\infty$. We will denote the points $\{J(e_{i}), J(p_{i-m})\}$ by $z_{1},\dots,z_{m+n}$.
In the case when $E=E_{\rho}$ is constructed from a unitary representation of $\Ga$, the following result holds. 
(cf. \cite{MT14}). 

\begin{lemma}\label{unifor}
Let $\rho:\Ga\to\mathrm{U}(r)$ be an admissible representation for a fixed set of weights $\{W_{i}\}_{i=1}^{m+n}$. For every choice of representatives $\{U_{i}\}_{i=1}^{n+m}$ there exists a holomorphic function $Y:\HH\rightarrow\mathrm{GL}(r,\CC)$ satisfying 
\begin{equation} \label{auto-Y}
Y(\s\tau)=Y(\tau)\rho(\s)^{-1},\quad \forall\,\s \in\Ga,\;\tau\in\HH,
\end{equation}
with $\zeta$-series expansions at each elliptic fixed point
\begin{equation}\label{Y-elliptic}
Y(\sigma_{i}\tau)=\left(\sum_{k=0}^{\infty}C_{i}(k)\zeta^{\nu_{i}k}\right)\zeta^{-\nu_{i}W_{i}}U_{i}^{-1},\quad i \leq m,
\end{equation}
and having the $q$-series expansions
\begin{equation} \label{Y-Fourier}
Y(\sigma_{i}\tau)=\left(\sum_{k=0}^{\infty}C_{i}(k)q^{k}\right)q^{-W_{i}}U_{i}^{-1},\quad i>m,
\end{equation}
\begin{equation} \label{Y-Fourier-2}
Y(\sigma_{m+n}\tau)=q^{-N}\left(\sum_{k=0}^{\infty}C_{m+n}(k)q^{k}\right)q^{-W_{m+n}}U_{m+n}^{-1},
\end{equation}
where the constant terms $C_{i}(0)\in\mathrm{GL}(r,\CC)$ for $i=1,\dots,n+m$. 
The set $\Upsilon(\rho)$ of all functions $Y$ with these properties is a torsor for the group of  automorphisms $\mathrm{Aut}\,E_{\rho}$ of the bundle $E_{\rho}$. 
\end{lemma}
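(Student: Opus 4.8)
The plan is to read the statement backwards through the correspondence of Section~\ref{geo}: a function $Y$ obeying \eqref{auto-Y} is the same datum as a holomorphic frame of $E_{\rho}$ over $X$. Writing $\tilde S=Y^{-1}$, the relation \eqref{auto-Y} becomes $\tilde S(\sigma\tau)=\rho(\sigma)\tilde S(\tau)$, so the $r$ columns of $\tilde S$ descend to holomorphic sections $s_{1},\dots,s_{r}$ of the local system $E_{0}$, linearly independent wherever $Y$ is invertible. Comparing with the trivializations $\Phi_{i},\Psi_{j}$ of Lemma~\ref{trans}, the expansions \eqref{Y-Fourier} and \eqref{Y-elliptic} say precisely that these sections extend \emph{holomorphically and as a frame} across the cusps $p_{1},\dots,p_{n-1}$ and all elliptic fixed points (the factors $U_{i}q^{W_{i}}$, resp. $U_{n+i}\zeta_{i}^{W_{n+i}}$, being exactly the local trivializing factors of $\tilde S$), while \eqref{Y-Fourier-2} says they extend \emph{meromorphically} at $p_{n}$, the $j$-th section acquiring a pole of order $-a_{j}$. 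Thus the content of the existence claim is: \emph{$E_{\rho}$ admits a meromorphic frame whose polar divisor is supported at the single cusp $p_{n}$, with orders prescribed by the Birkhoff--Grothendieck splitting type.}

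To produce this frame I would invoke the Birkhoff--Grothendieck theorem together with Lemma~\ref{coef-bounds}, fixing an isomorphism $E_{\rho}\cong\bigoplus_{j=1}^{r}\OO(a_{j})$ with $-(n+m)<a_{j}<0$ and setting $N=\mathrm{diag}(a_{1},\dots,a_{r})$. Each $\OO(a_{j})$ carries a meromorphic section that is nowhere zero on $\PP^{1}\setminus\{p_{n}\}$ and has a pole of order exactly $-a_{j}$ at $p_{n}$; transporting these through the splitting gives sections $s_{1},\dots,s_{r}$ framing $E_{\rho}$ away from $p_{n}$ and having the prescribed individual pole orders. Assembling their lifts into $\tilde S$ and setting $Y=\tilde S^{-1}$ yields \eqref{auto-Y}, while the holomorphic-frame property at the remaining special points forces \eqref{Y-Fourier} and \eqref{Y-elliptic} with invertible leading coefficients $C_{i}(0)$.

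The one genuinely delicate point — the step I expect to be the main obstacle — is to arrange the behaviour at $p_{n}$ to be \emph{exactly} the diagonal normal form of \eqref{Y-Fourier-2} rather than an unstructured meromorphic singularity, while preserving holomorphy at every other special point. This is precisely a Birkhoff factorization: expressing a frame of $E_{\rho}|_{\PP^{1}\setminus\{p_{n}\}}$ in the trivialization $\Phi_{n}$ produces a holomorphic $\GL(r,\CC)$-valued function on a punctured disc about $p_{n}$, which factors as (holomorphic invertible at $p_{n}$)$\,\cdot q^{N}\cdot\,$(holomorphic invertible on $\PP^{1}\setminus\{p_{n}\}$). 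Absorbing the right-hand factor into the frame leaves its $\Phi_{n}$-expression equal to $q^{N}$ times a function holomorphic and invertible at $p_{n}$, which is exactly the factor $q^{-N}$ of \eqref{Y-Fourier-2} for $Y=\tilde S^{-1}$. Concretely, what must be checked is that the leading coefficient vectors of the $r$ chosen sections in the $\Phi_{n}$-trivialization are linearly independent; adapting the sections to the splitting guarantees this, and the partial indices of the factorization are then the splitting coefficients $a_{j}$, tying the normalization to Lemma~\ref{coef-bounds}. The adjusting factor is holomorphic and invertible at $p_{1},\dots,p_{n-1}$ and the $e_{i}$, so it does not disturb the frame already fixed there.

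Finally, for the torsor statement I would take $Y_{1},Y_{2}\in\Up(\rho)$ and set $B=Y_{2}^{-1}Y_{1}$. From \eqref{auto-Y} one gets $B(\sigma\tau)=\rho(\sigma)B(\tau)\rho(\sigma)^{-1}$, the transformation law of a holomorphic section of $\mathrm{End}(E_{\rho})$. Substituting the expansions, the factors $U_{i},q^{\pm W_{i}}$ (resp. $U_{n+i},\zeta_{i}^{\pm W_{n+i}}$) cancel in each trivialization, and — crucially — at $p_{n}$ the two $q^{\mp N}$ factors cancel as well, so that in $\Phi_{n}$-coordinates $B$ reduces to a function holomorphic and invertible at $p_{n}$. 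Hence $B$ extends to a holomorphic, everywhere-invertible endomorphism, i.e. $B\in\Aut E_{\rho}$. Conversely, for any $B\in\Aut E_{\rho}$ the identical computation shows $Y_{1}B^{-1}\in\Up(\rho)$, with the same splitting matrix $N$; thus $\Aut E_{\rho}$ acts freely and transitively on $\Up(\rho)$. The centre $\CC^{\times}\subset\Aut E_{\rho}$ of scalar automorphisms acts by the overall rescaling $Y\mapsto\lambda Y$, merely multiplying all $C_{i}(k)$ by $\lambda$; modding out this overall scale presents $\Up(\rho)$ as a torsor for the projectivization $\PP\Aut E_{\rho}$. Irreducibility enters only through Lemma~\ref{coef-bounds}, which pins the splitting type and thereby guarantees $\Up(\rho)\neq\emptyset$; the torsor argument itself is formal.
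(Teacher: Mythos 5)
The paper contains no proof of this lemma to compare against: both Lemma~\ref{coef-bounds} and Lemma~\ref{unifor} are quoted from \cite{MT14}, so your attempt can only be measured against the statement itself and the way Section~\ref{genus 0} uses it. Measured that way, your proof is essentially correct, and your dictionary is the right one: with $\tilde S=Y^{-1}$, the $\Phi_{i}$-chart expression of the frame at a cusp $p_{i}$, $i<n$, is $q^{-W_{i}}U_{i}^{-1}\tilde S(\sigma_{i}\tau)=\left(\sum_{k}C_{i}(k)q^{k}\right)^{-1}$, so \eqref{Y-Fourier} with $C_{i}(0)\in\GL(r,\CC)$ is exactly ``holomorphic and a frame'' there, and similarly at the elliptic points and, with the extra factor $q^{N}$, at $p_{n}$. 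One remark on economy: the step you flag as the main obstacle is in fact automatic. Once the $r$ sections are the images, under a fixed splitting isomorphism $E_{\rho}\cong\bigoplus_{j}\OO(a_{j})$, of the tautological meromorphic sections with divisor $a_{j}p_{n}$, the $\Phi_{n}$-expression of the frame matrix is $H(q)q^{N}$ with $H$ holomorphic and invertible at $q=0$, and inverting gives \eqref{Y-Fourier-2} with $C_{n}(q)=H(q)^{-1}$ on the nose; no separate Birkhoff factorization is required, as your own parenthetical (``adapting the sections to the splitting guarantees this'') concedes. The factorization detour is not wrong, merely redundant.

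Two points deserve correction or sharpening. First, your closing sentence misplaces irreducibility: the lemma asserts existence for \emph{every} admissible $\rho$, and your construction delivers exactly that — neither the Birkhoff--Grothendieck splitting nor the non-emptiness of $\Up(\rho)$ uses Lemma~\ref{coef-bounds} or irreducibility; the bounds $-(n+m)<a_{j}<0$ merely constrain $N$, and irreducibility is hypothesized only in the torsor clause. Second, your torsor computation proves more than you state: since $Y$ is invertible, $Y\mapsto YB$ is a \emph{free} and transitive action of the full group $\Aut E_{\rho}$ on $\Up(\rho)$ as literally defined, with the scalars $\lambda I$ acting by $Y\mapsto\lambda^{-1}Y$; hence ``torsor for the projectivization'' is accurate only for $\Up(\rho)$ taken modulo overall scale, or after a normalization of, say, $C_{n}(0)$. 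You noticed this and patched it, which is the right call — the looseness sits in the statement, not in your argument. A cognate looseness you silently inherit: when some elliptic weight is nonzero, the factor $\zeta^{-\nu_{i}W_{n+i}}$ in \eqref{Y-elliptic} forces $\det Y\sim\zeta^{-\nu_{i}\mathrm{tr}\,W_{n+i}}$ to blow up at the elliptic fixed point, so ``holomorphic $Y:\HH\to\GL(r,\CC)$'' must be read on $\HH$ minus the elliptic orbit (equivalently on a branched cover), which is exactly what your orbifold-frame reading does.
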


\begin{proof} It follows from the definition of the bundle $E_{\rho}$, in terms of the transition functions \eqref{transition-ext}, and the transition function of a Birkhoff--Grothedieck splitting, that an isomorphism 
\begin{equation}\label{eq:isomorphism}
E_{\rho} \cong \bigoplus_{j=1}^{r}\mathcal{O}(a_{j})
\end{equation}
is equivalent to the existence of a holomorphic map $Y:\HH \to \mathrm{GL}(r,\CC)$, satisfying \eqref{auto-Y}--\eqref{Y-Fourier-2}, and that if $\rho$ is fixed, any two such maps would differ by left multiplication of a map of the form $g\circ J |_{\HH}$, with $g:\CC_{0}\to\mathrm{GL}(r,\CC)$ holomorphic  and such that $z^{-N} g z^{N}|_{\CC^{*}}$ extends to a holomorphic map over $\CC_{1}$, which corresponds to postcomposition of the isomorphism \eqref{eq:isomorphism} by an automorphism of the Birkhoff--Grothendieck splitting.
\end{proof}

\begin{remark}
The function $Y:\HH\to\mathrm{GL}(r,\CC)$ plays the role of a vector bundle uniformization map, and establishes the isomorphism of parabolic bundles between  $E_{\rho}$, as an extension of the local system $\Ga\setminus\HH_{0}\times \CC^{r}$, and the splitting $\oplus_{j=1}^{r}\mathcal{O}(a_{j})$ together with a parabolic structure, modulo the action of its group of automorphisms.
More concretely, the unitary representation $\rho:\Gamma\to\mathrm{U}(r)$ determines a parabolic structure on the bundle $E_{\rho}$. A collection of decreasing flags\footnote{Decreasing flags are parametrized by the homogeneous space $\mathrm{B}(r)\setminus\mathrm{GL}(r,\mathbb{C})$, where $\mathrm{B}(r)$ is the Borel group of lower triangular matrices.} is induced in the following way: for each factorization 
\[
\rho(\gamma_{i})=U_{i}e^{2\pi\sqrt{-1}W_{i}}U_{i}^{-1},\qquad U_{i}=(u_{i,jk})
\]
we consider the column vectors
\[
\mathbf{v}_{ij}=(u_{i,1j},\cdots,u_{i,rj}),\qquad j=1,\cdots, r.
\]
and 
\[
F_{ij}=\mathrm{Span}(\mathbf{v}_{i,j},\mathbf{v}_{i,j+1},\cdots,\mathbf{v}_{i,r}),
\]
Then $\mathbb{C}^{r}=F_{i1}\supset F_{i2}\supset\cdots,\supset F_{ir}\supset\{\mathbf{0}\}$ are the flags over the fibers $\{e_{i}\}\times\mathbb{C}^{r}$, $\{p_{i-m}\}\times\mathbb{C}^{r}$.
Now, consider an arbitrary choice of bundle uniformization map $Y:\mathbb{H}\to\mathrm{GL}(r,\mathbb{C})$ and the bundle map $\mathscr{J}:E_{0}\hookrightarrow E_{\rho}$, where $E_{0}=\Ga\setminus\HH_{0}\times\CC^{r}$. Recalling that over $\mathbb{H}^{*}\times\mathbb{C}^{r}$, the local bases at the elliptic fixed points and cusps are given, respectively, by the vectors  $\{\zeta_{i}^{\alpha_{ij}}\mathbf{v}_{ij}\}_{j=1}^{r}$, $\{q_{i}^{\alpha_{ij}}\mathbf{v}_{ij}\}_{j=1}^{r}$, The map $\mathscr{J}$ induces decreasing flags over the fibers $\pi^{-1}([e_{i}]), \pi^{-1}([p_{i}])\subset E_{\rho}$ (once one considers the trivializations over the affine charts) by means of the correspondence
\[
\mathbf{w}_{ij}=\left. Yq_{i}^{\alpha_{ij}}\mathbf{v}_{ij}\right|_{\tau=\tau_{i}}
\] 
which are precisely the columns of the matrices $C_{i}(0)$, and the subspaces $$F'_{ij}=\mathrm{Span}(\mathbf{w}_{ij},\mathbf{w}_{ij+1},\cdots,\mathbf{w}_{ir}),$$
up to the action of the group $\Aut E_{\rho}$ in the bundle $E_{\rho}$. 

\end{remark}
Let $E_{\bar{\rho}}\cong\bigoplus_{j=1}^{r}\mathcal{O}(a_{j})$ be realized by a matrix-valued function $Y$ as in lemma \ref{unifor}. It follows from proposition \ref{iso-reg-cusp} that $$\mathfrak{S}_{2}(\Ga,\rho)\cong H^{0}\left(\CC\PP^{1},\bigoplus_{j=1}^{r}\mathcal{O}(b_{j})\right),$$ 
where $b_{j}=-a_{j}-2$. When $\rho$ is irreducible, it follows that $a_{j} < 0\quad \forall j$. Keeping this in mind, and since every unitary representation splits as a sum of irreducibles, we could assume that $\rho$ is either irreducible, or that at least no irreducible factor is trivial, since such trivial factor would lead to the standard scalar case.
\begin{theorem}\label{theo-1} 
Let $\Ga$ be a genus 0 Fuchsian group of the first kind. Let $\mathscr{J}$ be the collection of multi-indices $I=(i,j,0)$, with $|\mathscr{J}|=\dim\mathfrak{S}_{2}(\Ga,\rho)$, determined by lexicographic order, up to a permutation of the elliptic fixed points and cusps.\footnote{The restriction $i\neq n$ can be removed with the action of $\mathrm{PSL}(2,\CC)$ on the choice of $J$.}  
The Poincar\'e  series $\{P_{I}\}_{I\in\mathscr{J}}$  form a basis for $\mathfrak{S}_{2}(\Ga,\rho)$.
\end{theorem}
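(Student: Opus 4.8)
The plan is to reduce the assertion to a statement about the constant Fourier coefficients of cusp forms, and ultimately to the nonvanishing of a vector-weighted Vandermonde determinant. First I would note that every $P_{I}$ lies in $\mathfrak{S}_{2}(\Ga,\rho)$ and that $|\mathscr{J}|=\dim\mathfrak{S}_{2}(\Ga,\rho)=:d$, so it is enough to prove that $\{P_{I}\}_{I\in\mathscr{J}}$ is linearly independent. By the corollary following Proposition \ref{inner-Poincare}, a relation $\sum_{I\in\mathscr{J}}\lambda_{I}P_{I}=0$ holds if and only if $\sum_{I\in\mathscr{J}}\overline{\lambda_{I}}\mathbf{w}_{I}=0$, so (since conjugation preserves independence) the independence of the $P_{I}$ is equivalent to the independence of the period vectors $\{\mathbf{w}_{I}\}_{I\in\mathscr{J}}\subset\CC^{d}$. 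By Proposition \ref{inner-Poincare}, for $I=(i,j,0)$ the entry $w_{kI}=\langle f_{k},P_{I}\rangle_{P}$ equals the $j$-th component $(\mathbf{a}_{i}(0))_{j}$ of the constant term of $f_{k}$ at the cusp $p_{i}$. Hence the problem becomes: the constant-term functionals $\ell_{ij}\colon f\mapsto(\mathbf{a}_{i}(0))_{j}$, for $(i,j,0)\in\mathscr{J}$, form a basis of the dual space $\mathfrak{S}_{2}(\Ga,\rho)^{*}$.

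Next I would pass to the uniformization model of this section. Realizing $E_{\overline{\rho}}\cong\bigoplus_{j}\mathcal{O}(a_{j})$ by a matrix $Y$ as in Lemma \ref{unifor} and invoking Proposition \ref{iso-reg-cusp}, every $f\in\mathfrak{S}_{2}(\Ga,\rho)$ is encoded by a tuple of polynomials $(g_{1},\dots,g_{t})$ with $\deg g_{k}\leq b_{k}=-a_{k}-m-2$, via $f=\sum_{k=1}^{t}(g_{k}\circ J)\,v_{k}$. Writing $z_{i}=J(p_{i})$, which are finite and pairwise distinct for $i=1,\dots,n-1$ by the univalence of $J$ (with $z_{n}=\infty$), and using that all parabolic weights are nonzero so that the positive powers in $q^{W_{i}}$ cannot contaminate the $q^{0}$-coefficient, a direct expansion yields
\[
\mathbf{a}_{i}(0)=\sum_{k=1}^{t}g_{k}(z_{i})\,\mathbf{a}_{ik}(0),\qquad i=1,\dots,n-1,
\]
where the vectors $\{\mathbf{a}_{ik}(0)\}_{k=1}^{t}\subset\CC^{r}$ (the constant terms of the $v_{k}$) are linearly independent, as recorded before the statement from $\det C_{i}(0)\neq0$. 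Expanding $g_{k}(z)=\sum_{p=0}^{b_{k}}c_{k,p}z^{p}$ identifies the $c_{k,p}$ as linear coordinates on $\mathfrak{S}_{2}(\Ga,\rho)$ and presents each functional as
\[
\ell_{ij}(f)=\sum_{k=1}^{t}\sum_{p=0}^{b_{k}}c_{k,p}\,z_{i}^{\,p}\,\big(\mathbf{a}_{ik}(0)\big)_{j},
\]
so that the matrix of all functionals has entries $z_{i}^{\,p}\big(\mathbf{a}_{ik}(0)\big)_{j}$ with rows indexed by $(i,j)$ and columns by $(k,p)$.

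I would then record that the full family $\{\ell_{ij}:1\leq i\leq n-1,\ 1\leq j\leq r\}$ spans the dual: if $f\neq0$ then some $g_{k}\not\equiv0$, and since $\deg g_{k}\leq b_{k}\leq n-3$ by Lemma \ref{coef-bounds}, it has fewer than $n-1$ roots, so $g_{k}(z_{i})\neq0$ at some finite cusp $p_{i}$; linear independence of the $\mathbf{a}_{ik}(0)$ then forces $\mathbf{a}_{i}(0)\neq0$, whence $\ell_{ij}(f)\neq0$ for some $j$. Consequently the $d$ functionals indexed by $\mathscr{J}$ form a basis exactly when the corresponding $d\times d$ minor of the matrix $\big(z_{i}^{\,p}(\mathbf{a}_{ik}(0))_{j}\big)$ is nonzero.

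The hard part is precisely this nonvanishing, a generalized Vandermonde determinant with vector weights. The essential difficulty is that at each cusp only $t$ of the $r$ rows are independent (the functionals there factor through the evaluation $f\mapsto(g_{1}(z_{i}),\dots,g_{t}(z_{i}))$ through the rank-$t$ coefficient matrix $A_{i}=\big((\mathbf{a}_{ik}(0))_{j}\big)$), so one cannot naively retain all $r$ rows of a cusp without exceeding the budget $d=\sum_{k=1}^{t}(b_{k}+1)$. I would prove the minor is nonzero by induction on the cusps employed, using the permutation of the cusps allowed in the statement to order them so that the successive elimination is nondegenerate and the highest-degree components $g_{k}$ are pinned down first: knowing $g_{k}(z_{i})$ at $b_{k}+1$ distinct points forces $g_{k}\equiv0$, and the bounds $b_{k}\leq n-3<n-1$ guarantee enough distinct cusps are available. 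The delicate bookkeeping — determining, for the lexicographically-first admissible selection and after a suitable relabeling of the cusps, which coordinate $j$ to retain at each cusp so that the kept functionals stay independent and total exactly $d$ — is where the permutation freedom and the degree inequalities of Lemma \ref{coef-bounds} do the real work.
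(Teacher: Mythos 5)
Your reductions are correct as far as they go, and they parallel the paper's own setup: the equivalence of linear independence of $\{P_{I}\}_{I\in\mathscr{J}}$ with the independence of the period vectors $\mathbf{w}_{I}$ (via the corollary to Proposition \ref{inner-Poincare}), the encoding of a cusp form by polynomials $(g_{1},\dots,g_{t})$ through the splitting \eqref{splitting}, the identity $\mathbf{a}_{i}(0)=\sum_{k}g_{k}(z_{i})\,\mathbf{a}_{ik}(0)$ with $\{\mathbf{a}_{ik}(0)\}_{k=1}^{t}$ independent, and the spanning of the \emph{full} family of constant-term functionals using $b_{k}\leq n-3$ from Lemma \ref{coef-bounds}. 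But there is a genuine gap exactly where you announce ``the hard part'': the nonvanishing of the $d\times d$ minor of $\bigl(z_{i}^{\,p}(\mathbf{a}_{ik}(0))_{j}\bigr)$ is never proved. The inductive ``successive elimination with delicate bookkeeping'' is a plan, not an argument, and that minor's nonvanishing \emph{is} the content of the theorem; what you have actually established (membership in $\mathfrak{S}_{2}(\Ga,\rho)$, spanning by the infinite family) was already known from sections \ref{theo} and \ref{genus 0}. Moreover your own worry is well founded and is a real obstruction to the route you sketch: Lemma \ref{unifor} gives $\det C_{i}(0)\neq 0$, but it does not control \emph{which} $t$ rows of the rank-$t$ matrix $A_{i}=\bigl((\mathbf{a}_{ik}(0))_{j}\bigr)$ are independent, and the top $t\times t$ block of an invertible matrix need not be invertible; the permutation freedom in the statement acts on the cusps, not on the coordinates $j$, so it cannot by itself guarantee that the retained functionals at each cusp stay independent.

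The paper sidesteps the determinant entirely by an orthogonal-complement and zero-counting argument, which is both shorter and avoids the row-selection problem. Write $\dim\mathfrak{S}_{2}(\Ga,\rho)=ct+d$ with $0\leq d<t$, and suppose $\mathfrak{V}=\mathrm{Span}\{P_{I}\}_{I\in\mathscr{J}}$ were proper, so there is $0\neq f\in\mathfrak{V}^{\perp}$ with splitting \eqref{splitting}. Proposition \ref{inner-Poincare} makes the relevant components of $\mathbf{a}_{i}(0)$ vanish for $1\leq i\leq c$, and the invertibility of $C_{i}(0)$ then forces $g_{k}(J(p_{i}))=0$ for \emph{every} $k$ and every such $i$; hence the polynomials $g_{k}$ acquire at least $ct$ zeros in total, while $\sum_{k=1}^{t}b_{k}=ct+d-t<ct$, a contradiction (the case $r=1$ being a one-line count against $c_{1}(\mathcal{O}(b))=b$). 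This global degree count is exactly the quantitative input your minor-nonvanishing would have to encode, but it requires no cusp-by-cusp choice of independent rows. To complete your proposal you would either have to prove the vector-weighted Vandermonde nonvanishing directly --- which in effect means rediscovering this count --- or replace the final step by the contradiction argument above.
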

\begin{proof}
The fundamental idea behind the proof is to show that the number of zeros of any element of the orthogonal complement of the linear span of $\{P_{I}\}_{I\in\mathscr{J}}$ would have to be larger than allowed, and therefore such element would be trivial. 

Let us consider a hauptmodul $J$ as before. Then, each column $v_{k}$ of ${}^{t}YJ'$ 
corresponds to a meromorphic section of $\mathcal{O}(b_{k})$. We should distinguish the columns $\{v_{1},\dots,v_{t}\}$ ($t\leq r$) for which $b_{k}\geq 0$, since in this case, the functions
$(g_{k}\circ J)v_{k}$, where $g_{k}(z)$ is a polynomial of degree at most $b_{k}$, are cusp forms. In particular, the functions $\{v_{1},\dots,v_{t} \}$  are cusp forms, and for every fixed elliptic point $\{e_{i}\}$ or cusp $\{p_{i}\}$, $i\neq m+n$,  $\dim(\mathrm{Span}\{\mathbf{b}_{i1}(0),\dots,\mathbf{b}_{it}(0)\})=t$ (as a consequence of proposition \ref{iso-reg-cusp}, the univalence of $J$ on $\mathfrak{F}$ (cf. \cite{MT14}), and lemma \ref{unifor}, since $\det C_{i}(0)\neq 0$). Therefore at least $t$ of the $r$ Poincar\'e series $\{P_{I}\}$, $I=(i,j,0)$ would be nonzero. Since these Poincar\'e series are obviously linearly independent, 
\[
t\leq \dim\mathfrak{S}_{2}(\Ga,\rho).
\]
Similarly, 
the following rough upper bound follows from corollary \ref{dimension-reg-cusp} 
\begin{equation}\label{bound}
\dim\mathfrak{S}_{2}(\Ga,\rho)=\sum_{j=1}^{t}(b_{j}+1)< r(m+n).
\end{equation}
Notice that $m+n\geq 3$ is a necessary condition when $g=0$, and that indeed, $r(m+n)$, the number of multi-indices of the form $(i,j,0)$, is always greater than the dimension of  $\mathfrak{S}_{2}(\Ga,\rho)$.

Consider the lexicographic subcollection $\mathscr{J}$ of the set of indices of the form $(i,j,0)$ with $|\mathscr{J}|=\dim\mathfrak{S}_{2}(\Ga,\rho)$, and assume that 
$\mathfrak{V}=\text{Span}\left(\{P_{I}\}_{I\in\mathscr{J}}\right)$ is a proper subset of $\mathfrak{S}_{2}(\Ga,\rho)$, so that $\mathfrak{V}^{\perp}$ is nontrivial. 

The case $r=1$ is straightforward, since any nonzero 
$f\in\mathfrak{V}^{\perp}$ would correspond to a holomorphic section with $b+1$ zeros by proposition \ref{inner-Poincare}, but $c_{1}(\mathcal{O}(b))=b$, a	 contradiction.

As a consequence of the bundle splittings, every cusp form $f(\tau)$ can be uniquely expressed as a linear combination
\begin{equation}\label{splitting}
f(\tau)=\sum_{k=1}^{t}(g_{k}\circ J)v_{k}.
\end{equation}
Pick any nonzero $f\in\mathfrak{V}^{\perp}$, and consider its splitting \eqref{splitting}. Let us express $\dim\mathfrak{S}_{2}(\Ga,\rho)=ct+d$, $c,d\in\ZZ$, $0\leq d<t$. By hypothesis, $\langle P_{I}(\tau),f(\tau)\rangle_{P}=0$ for all $I=(i,j,0)$, $1\leq i\leq c$, $1\leq j\leq t$. This implies that for each $1\leq k\leq t$, the polynomial $g_{k}(z)$ would have a zero at each $z_{i}$, $i=1,\dots, c$, since otherwise  
$\mathbf{b}_{ik}(0)=\lambda\cdot {}^{t}C_{i}(0)_{k}$
(the $k$th-column of the matrix ${}^{t}C_{i}(0)$) in the elliptic and $q$-series expansions \eqref{elliptic} and \eqref{Fourier} of $(g_{k}\circ J)v_{k}$ at $p_{i}$, with $\lambda\neq 0$, and thus there would be a nontrivial linear relation among several columns of at least one constant matrix $C_{i}(0)$, $1\leq i\leq c$, a contradiction. 
Therefore, the total number of zeros of the polynomials $g_{k}$ 
would be bounded below by $ct$, but
\[
ct>\sum_{k=1}^{t}b_{k}=\sum_{k=1}^{t}c_{1}(\mathcal{O}(b_{k})),
\]
a contradiction. This concludes the proof.
\end{proof}


For every $i=1,\dots,m+n$, consider the standard basis $\{\mathbf{e}_{jk}\}_{\alpha_{ij}-\alpha_{ik}>0}$ of the Lie algebra $\mathfrak{n}(W_{i})$, so that we can write $B_{i}(0)=\sum_{\alpha_{ij}-\alpha_{ik}>0} b^{jk}\mathbf{e}_{jk}$ (see remark \ref{q-series}). Altogether, through their canonical bases, the collection of Lie algebras $\{\mathfrak{n}(W_{i})\}_{i=1}^{m+n}$ parametrize a collection of Poincar\'e series $\{P_{I}\}_{I\in\cI}$, where $\cI$ is the set of multi-indices $\{ i,jk,0\}$ such that $\alpha_{ij}-\alpha_{ik}>0$.
Each Poincar\'e series in the collection $\{P_{I}\}_{I\in\cI}$ is clearly different from 0 (see remark \ref{nonzero}), and 
\[
|\cI|=\sum_{i=1}^{m+n}\dim_{\CC}\cF_{W_{i}}.
\]
Since $\dim H^{0}(\CC\PP^{1},E_{\Ad\rho})=\dim\left(\End\CC^{r}\right)^{\Ad\rho}\leq r^{2}$, we conclude when $g=0$ that
\[
\dim\mathfrak{S}_{2}(\Ga,\Ad\rho)\leq \sum_{i=1}^{m+n}\dim_{\CC}\cF_{W_{i}}.
\]
As a corollary of theorem \ref{theo-1}, we obtain the following result. Its significance is geometric, giving a precise formulation of how a geometric structure, namely, the flags in a parabolic bundle and their infinitesimal deformations, determine completely the  spaces of matrix-valued cusp forms of weight 2 when $S=\CC\PP^{1}$.

\begin{theorem}\label{theo-2} Let $\Ga$ be a genus 0 Fuchsian group, and $\cI$ the set of multi-indices determined by the collection of canonical bases of the Lie algebras $\mathfrak{n}(W_{i})$, $i=1,\dots,m+n$. Any lexicographic subcollection $\cI'\subset\cI$ satisfying $|\cI'|= \dim\mathfrak{S}_{2}(\Ga,\Ad\rho)$ spans $\mathfrak{S}_{2}(\Ga,\Ad\rho)$.
\end{theorem}






\noindent \textbf{Acknowledgments.}
I am deeply grateful to Leon Takhtajan for introdu-cing me to the subject, taking part in multiple discussions, 
and making many useful remarks during the creation of the present work. I will also like to thank Irwin Kra, 
Alberto Verjovsky, Scott Wolpert, and the referee, for providing important suggestions.

\bibliographystyle{amsalpha}
\bibliography{Poincare}

\end{document}